\newcommand{\TFAE}{The following conditions are equivalent:}
\newcommand{\tFAE}{the following conditions are equivalent:}
\newcommand{\CM}{Cohen-Macaulay}
\newcommand{\ff}{\text{if and only if}}
\newcommand{\wrt}{with respect to}
\newcommand{\B}{\mathcal{B} }
\newcommand{\I}{\mathbb{I} }
\newcommand{\n}{\mathfrak{n} }
\newcommand{\m}{\mathfrak{m} }
\newcommand{\M}{\mathfrak{M} }
\newcommand{\R}{\mathcal{R}(I) }
\newcommand{\Z}{\mathbb{Z} }
\newcommand{\rt}{\rightarrow}
\newcommand{\xar}{\longrightarrow}
\newcommand{\ov}{\overline}
\newcommand{\bx}{\mathbf{x}}
\newcommand{\by}{\mathbf{y}}
\newcommand{\wt}{\widetilde }
\newcommand{\si}{\sigma }
\newcommand{\fg}{\operatorname{fg}}
\newcommand{\grade}{\operatorname{grade}}
\newcommand{\depth}{\operatorname{depth}}
\newcommand{\red}{\operatorname{red}}
\newcommand{\amp}{\operatorname{amp}}
\theoremstyle{plain}
\newtheorem{theorem}{Theorem}[section]
\newtheorem{corollary}[theorem]{Corollary}
\newtheorem{proposition}[theorem]{Proposition}
\theoremstyle{definition}
\newtheorem{definition}[theorem]{Definition}
\newtheorem{remark}[theorem]{Remark}
\newtheorem{example}[theorem]{Example}
\theoremstyle{remark}
\numberwithin{equation}{theorem}
\begin{document}

\title[Higher Associated graded modules]{Ratliff-Rush Filtration, regularity and \\ depth of Higher Associated graded modules \\ Part II}
 \author{Tony~J.~Puthenpurakal}
\date{\today}
\address{Department of Mathematics, Indian Institute of Technology Bombay, Powai, Mumbai 400 076}

\email{tputhen@math.iitb.ac.in}

\thanks{The author was partly supported by IIT Bombay seed grant 03ir053}

\subjclass{Primary 13A30; Secondary 13D40, 13D07,13D45}

\keywords{multiplicity, blow-up algebra's, Ratliff-Rush filtration, Hilbert functions}

\begin{abstract}
 Let $(A,\m)$ be a Noetherian local ring, let $M$ be a finitely generated \CM \ $A$-module of dimension $r \geq 2$ and let $I$ be an ideal of definition
 for $M$. Set $L^I(M) = \bigoplus_{n\geq 0}M/I^{n+1}M$. In part one of this paper we showed that $L^I(M)$ is a module over $\R$,
  the Rees algebra of $I$ and we gave many applications of $L^I(M)$  to study  the associated graded module, $G_I(M)$.
In this paper we give many further applications of our technique;  most notable is a reformulation of a classical result due to Narita in terms of
the Ratliff-Rush filtration.     This reformulation can be extended to all dimensions $\geq 2$.
\end{abstract}

\maketitle

\tableofcontents

\section*{Introduction}
Dear Reader; while reading this paper it is a good idea to have  part 1 of this paper \cite{Pu5}.
Let $(A,\m)$ be a Noetherian local ring with residue field $k = A/\m$. Let $M$ be a finitely generated \CM \ $A$-module of dimension $r \geq 2$ and let $I$ be an ideal of definition
for $M$ i.e., $\lambda (M/IM)$ is finite.   Here $\lambda(-)$ denotes length.         Let $G_I(A) = \bigoplus_{n\geq 0}I^n/I^{n+1}$ be the associated graded ring of $A$ \wrt \ $I$ and let $G_I(M)= \bigoplus_{n\geq 0}I^nM/I^{n+1}M$ be the associated graded module of $M$ \wrt \ $I$.

Set $L^I(M) = \bigoplus_{n\geq 0}M/I^{n+1}M$. In part one of this
paper we showed that $L^I(M)$ is a    module over $\R$;
the Rees-algebra of $I$. It is \emph{not finitely generated} as a $\R$-module. In part 1 we gave
 applications of $L^I(M)$ in the study of associated graded modules. We have collected these properties in section \ref{Lprop}.

 \emph{Applications } \\
 In part 1 of this paper we gave \emph{five} applications of  the technique of $L^I(M)$ in the study of $G_I(M)$. In part 2 we give \emph{six} more applications of our technique.

\noindent\textbf{VI.} Let  $x$ be $M$-superficial \wrt \  $I$. Set $N = M/xM$ and $u =xt \in R(I)$.
We say the Ratliff-Rush filtration on $M$ \wrt \ $I$ \emph{behaves well mod $x$ }  if
$$\ov{\wt{I^{n} M}} = \wt{I^{n} N} \quad \text{ for all} \ n \geq 1.  $$
We prove that the  Ratliff-Rush filtration on $M$ \wrt \ $I$ behaves well mod $x$ \ff \ $H^{1}(L^{I}(M)) = 0$; see Theorem \ref{h1zero}. In particular our result proves that
 if Ratliff-Rush filtration behaves well mod one superficial element then it does so \emph{with any} superficial element.

We then relate vanishing of $H^{i}(L^{I}(M))$ for $i = 1,\ldots, s$ to good behavior of Ratliff-Rush filtration mod a superficial sequence of length $s$; see Theorem \ref{rrMODsup}. Thus good behavior of the Ratliff-Rush filtration mod a superficial sequence  is a cohomological property.

 \noindent\textbf{VII.} \emph{minimal} $\I$- \emph{invariant} :

 Recall that we say $G_I (M)$ is generalized \CM \ module if
\begin{equation*}
\lambda (H^i (G_I (M))) < \infty \;\mbox{for}\; i = 0, 1,\cdots, r
- 1.
\end{equation*}
For generalized \CM \ module the St\"{u}ckrad-Vogel invariant
\begin{equation*}
\I(G_I (M)) = \sum^{r - 1}_{i = 0} \binom{r - 1}{i}
\lambda (H^i (G_I (M)))
\end{equation*}
plays a crucial role. If $x^* \in G_I (A)_1$ is $G_I (M)$- regular then
one can verify
\begin{equation*}
\I(G_I (M/xM)) \leq  \I(G_I (M)).
\end{equation*}
So in some sense if we have to study minimal $\I$- invariant then
we have to first consider the case when  $ \depth G_I (M) = 0.$   In Theorem \ref{vogel} we
prove that if $G_I
(M)$ is generalized \CM \  and  $ \depth G_I (M) = 0$ then
\begin{equation*}
\I(G_I (M)) \geq r \bullet \lambda \left( H^0 (G_I (M)) \right).
\end{equation*}
We also prove that the following are equivalent
\begin{enumerate}[\rm (i)]
\item
$\I (G_I (M)) = r \bullet \lambda (H^0 (G_I (M)))$
\item
$H^i (L^I (M)) = 0$ for $i = 1, 2, \cdots, r - 1.$
\item
 The Ratliff-Rush
filtration on $M$ behaves well mod superficial sequences (of length $r -1$).
\end{enumerate}

\noindent\textbf{VIII.}  A classical result, due to Narita \cite{Nar}  states that if
 $(A, \m)$ is \CM \  of $\dim 2$ then
\begin{equation*}
e^I_2 (A) = 0 \ \ \mbox{iff}\ \  \red (I^n) = 1 \;\mbox{for all} \ n \gg 0.
\end{equation*}
This  can be easily extended to Cohen-Macaulay modules of
dimension two.  However  Narita's result fails (even for \CM \
rings) in dimension $\geq 3$; see \ref{CPR}.
We first reformulate Narita's result in dimension 2.

 Let $\wt{G}_I(M) = \bigoplus_{n \geq 0} \wt{I^nM}/\wt{I^{n+1}M}$ be the associated graded module of the Ratliff-Rush filtration. We prove
$$e^I_2 (M) = 0  \Longleftrightarrow \widetilde{G_I} (M) \ \text{ has minimal multiplicity.} $$
This reformulation can be generalized. We prove that if $ \dim M \geq 2$ then
 $$e^I_i (M) = 0 \ \text{for  \ }  i = 2,\ldots, r  \Longleftrightarrow \widetilde{G_I} (M) \ \text{ has minimal multiplicity.} $$

\noindent\textbf{IX.} Assume $G_I (M)$ is Generalized \CM. In Theorem \ref{asymp} we give an explicit computation of
$H^i( G_{I^n}(M))$ for $n \gg 0$. This is  in terms of
$H^i(L^I(M))_{-1}$ for $i = 1,\ldots, r-1$.

\noindent\textbf{X.}
Set $$\xi_I(M) = \lim_{n \to \infty} \depth G_{I^n}(M).$$
By a result of Elias, for the case $M = A$, this limit exists. In part 1 of the paper we proved that this limit also exists for \CM \ modules.
In this paper we prove, see Theorem \ref{xi-mod-x-large}, that if $x$ is $M$-superficial \wrt\ $I$ then
$$ \xi_I(M/x^sM) \geq \xi_I(M) -1 \quad \text{for all} \ s \gg 0. $$
We also give an example which shows that \emph{strict inequality} can occur above. We \textbf{do not } know whether
$\xi_I(M/xM) \geq \xi_I(M) -1 $.

\noindent\textbf{XI.}
 Let $M$ be a \CM \ module of dimension $3$ and let $\red_I (M) = 2.$  In Theorem \ref{red2} we prove  $e^I_3 (M) \leq
0.$ We also show that
\[
     e^I_3 (M) = 0   \Longleftrightarrow         \xi_I(M) \geq 2.
\]

\section{Notation and Preliminaries}
In this section we introduce some notation and discuss a few preliminaries which will
be used in this paper.
In this paper all rings are commutative Noetherian and all modules (\emph{unless stated otherwise})
are assumed finitely generated. We  use  terminology from  \cite{BH}.
Let
$(A,\m)$ be a local ring of dimension $d$ with residue field $k =
A/\m$. Let $M$ be \CM \  $A$-module of dimension $r$. Let $I$ be an ideal
 ideal of definition for $M$.

\s If $p \in M$ is non-zero and  $j$ is the largest integer such that $p \in I^{j}M$,
then we let $p^*$ denote the image of $p$
 in $I^{j}M/I^{j+1}M$.

\s
\label{hilbcoeff}
 The \emph{Hilbert function} of $M$ with respect to $I$ is the function
\[
 H_{I}(M,n) =  \lambda(I^nM/I^{n+1}M)\quad \text{for all} \ n \geq 0.
\]
It is well known that the formal power series $\sum_{n \geq 0}H_{I}(M,n)z^n$
represents a rational function of a
special type:
\begin{equation*}
\sum_{n \geq 0}H_{I}(M,n) z^n = \frac{h_{I}(M ,z)}{(1-z)^{r}}\quad \text{where}
\ r = \dim M \ \text{and} \ h_{I}(M,z) \in\mathbb{Z}[z].
\end{equation*}
 Set
$e_{i}^{I}(M) = (h_I(M,z))^{(i)}(1)/i! $ for all $i\geq 0$. The integers $e_{i}^{I}(M)$ are called \emph{Hilbert
coefficients} of $M$ with
respect to $I$.
The number $e_{0}^{I}(M)$ is also called the \emph{multiplicity} of $M$ with respect
to $I$.
Set $\chi_{1}^{I}(M) = e_{1}^{I}(M) - e_{0}^{I}(M) + \lambda(M/IM)$.

\s Assume $r = \dim M >0$. Since $M$ is \CM \ we give the following equivalent definition of superficial elements and superficial sequences.
 Let $x \in I$. We say $x$ is \emph{$M$-superficial \wrt \ $I$} if $(I^{n+1}M \colon_M x) = I^nM$ for all
 $n \gg 0$.

 Assume $i \leq r$. Let $\bx = x_1,\ldots, x_i \in I \setminus I^2$. We say $\bx$ is an $M$-superficial sequence \wrt \ $I$; if $x_1$ is $M$-superficial \wrt \ $I$, $x_2$ is $M/x_1M$-superficial \wrt \ $I$,$\cdots$, $x_r$ is
$M/(x_1,\ldots,x_{r-1})M$-superficial \wrt \ $I$.

\s The advantage of dealing  of working with modules is that we do not have to change rings while going mod superficial elements. This we do. However the following remark is relevant.
\begin{remark}
Let $x_1,... ,x_s$ be a sequence in  $ I$ and
set $J= (x_1,... ,x_s)$.
Set $B = A/J$,  $K = I/J$ and  $N = M/JM$. Notice
\[
G_I(N) = G_K(N) \quad \text{and} \quad \depth_{ G_I(A)} G_I(N) = \depth_{G_K(B)} G_K(N).
\]
\end{remark}

\s\textbf{ Associated graded module and Hilbert function mod a superficial element: }\label{mod-sup-h} \\
Let $x \in I$
be $M$-superficial.  Set $N = M/xM$. The following is well-known cf., \cite{Pu1}.
\begin{enumerate}[\rm(1)]
  \item Set $b_I(M,z) = \sum_{i \geq 0}\ell\big( (I^{n+1}M \colon_M x)/ I^nM \big)z^n$. Since
  $x$ is $M$-superficial we have $b_I(M,z) \in \Z[z]$.
  \item $h_I(M,z) = h_I(N,z) - (1-z)^rb_I(M,z)$; cf., \cite[Corollary 10 ]{Pu1}.
  \item So we have
  \begin{enumerate}[\rm(a)]
    \item $e_i(M) = e_i(N)$ for $i = 0,\ldots, r-1$.
    \item $e_r(M) = e_r(N) - (-1)^r b_I(M,1)$.
  \end{enumerate}
  \item
   The following are equivalent
   \begin{enumerate}[\rm(a)]
     \item $x^*$ is $G_I(M)$-regular.
     \item $G_I(N) = G_I(M)/x^*G_I(M)$
     \item $b_I(M,z) = 0$
     \item $e_r(M) = e_r(N)$.
   \end{enumerate}
   \item
   \emph{(Sally descent)} If $\depth G_I(N) > 0$ then $x^*$ is $G_I(M)$-regular.
\end{enumerate}

\s \textbf{The Ratliff-Rush filtration and its Hilbert function:}
For definition of Ratliff-Rush filtration and some basic properties see \cite[section 2]{Pu5}.
We assume $\dim M > 0$.
Since $\wt{I^{n}M} = I^{n}M$ for all $n \gg 0$ we get that the function
$\wt{H}_{I}(M,n) =  \lambda( \wt{I^{n}M}/\wt{I^{n+1}M})$ is
a polynomial function. This is the Hilbert function of
$\wt{G}_I(M) = \bigoplus_{n \geq 0} \wt{I^nM}/\wt{I^{n+1}M}$; the associated graded module of the
Ratliff-Rush filtration on $M$.
As usual set
\[
\sum_{n \geq 0}\lambda( \wt{I^{n}M}/\wt{I^{n+1}M})z^n  = \frac{\wt{h}_{I}(M,z)}{(1-z)^r}; \ \text{where $\wt{h}_{I}(M,z) \in \Z[z]$.}
\]
 Set $\wt{e}_{i}^{I}(M) = (\wt{h}_{I}(M,z))^{(i)}(1)/i!$ the \emph{Hilbert coefficients}
of the Ratliff-Rush filtration of $M$ \wrt \ $I$.

 \s \label{rrhilbertcoeff}\emph{Relation between Hilbert coefficients of Ratliff-Rush filtration and the usual $I$-adic filtration
 on $M$}\\
 For all $n \geq 0 $ we have the following exact sequence
 \begin{equation}\label{rr-usual}
 0 \xar \frac{\wt{I^{n+1}M}}{I^{n+1}M} \xar \frac{M}{I^{n+1}M} \xar \frac{M}{\wt{I^{n+1}M}} \xar 0
 \end{equation}
   $$ \text{Set} \quad  r_I(M,z) = \bigoplus_{n \geq 0} \lambda\left(\frac{\wt{I^{n+1}M}}{I^{n+1}M}\right)z^n.$$
  Notice        $r_I(M,z) \in  \Z[z]$.
   Using \ref{rr-usual} we get
 \[
 h_I(M,z) = \wt{h_I}(M,z) + (1-z)^{r+1}r_I(M,z).
 \]
 Therefore we have
 \begin{enumerate}[\rm (a)]
   \item
 $e_{i}^{I}(M) = \wt{e}_{i}^{I}(M)$ for $0 \leq i \leq r$.
 \item
 $e_{r+1}^{I}(M) = \wt{e}_{r+1}^{I}(M) + (-1)^{r+1}r_I(M,1). $
 \item
 The following are equivalent
 \begin{enumerate}[\rm (i)]
 \item
 $\depth G_I(M) > 0$.
 \item
 $r_I(M,z) = 0$.
 \item
 $ e_{r+1}(M) = \wt{e_{r+1}}(M)$.
 \end{enumerate}
 \end{enumerate}

\s \textbf{Base change:}
\label{AtoA'}
In our arguments we do use  a few base changes. See \cite[1.4]{Pu5}
for details.
\section{Some Properties of $L^{I}(M)$}\label{Lprop}

In this section we collect all the properties of $L^{I}(M)$ which we proved in \cite{Pu5}. Throughout thus section
$(A,\m)$ is a Noetherian local ring with infinite residue field, $M$ is a \emph{\CM }\ module of dimension $r \geq 1$ and $I$ is \emph{an ideal of definition} for
$M$.

\s \label{mod-struc} Set $\R = A[It]$;  the Rees Algebra of $I$. In \cite[4.2]{Pu5} we proved that \\
$L^{I}(M) = \bigoplus_{n\geq 0}M/I^{n}M$ is a $\R$-module.

 \s Set $\M = \m\oplus R(I)_+$. Let $H^{i}(-) = H^{i}_{\M}$ denote the $i^{th}$-local cohomology functor \wrt \ $\M$. Recall a graded $\R$-module $L$ is said to be
*-Artinian if
every descending chain of graded submodules of $L$ terminates. For example if $E$ is a finitely generated $\R$-module then $H^{i}(E)$ is *-Artinian for all
$i \geq 0$.

\s \label{zero-lc} In \cite[4.7]{Pu5} we proved that
\[
H^{0}(L^I(M)) = \bigoplus_{n\geq 0} \frac{\wt{I^{n+1}M}}{I^{n+1}M}.
\]
\s \label{Artin}
For $L^I(M)$ we proved that for $0 \leq i \leq  r - 1$
\begin{enumerate}[\rm (a)]
\item
$H^{i}(L^I(M))$ are  *-Artinian; see \cite[4.4]{Pu5}.
\item
$H^{i}(L^I(M))_n = 0$ for all $n \gg 0$; see \cite[1.10 ]{Pu5}.
\item
 $H^{i}(L^I(M))_n$  has finite length
for all $n \in \mathbb{Z}$; see \cite[6.4]{Pu5}.
\item
$\lambda(H^{i}(L^I(M))_n)$  coincides with a polynomial for all $n \ll 0$; see \cite[6.4]{Pu5}.
\end{enumerate}

\s \label{I-FES} The natural maps $0\rt I^nM/I^{n+1}M \rt M/I^{n+1}M \rt M/I^nM \rt 0 $ induce an exact
sequence of $R(I)$-modules
\begin{equation}
\label{dag}
0 \xar G_{I}(M) \xar L^I(M) \xrightarrow{\Pi} L^I(M)(-1) \xar 0.
\end{equation}
We call (\ref{dag}) \emph{the first fundamental exact sequence}.  We use (\ref{dag}) also to relate the local cohomology of $G_I(M)$ and $L^I(M)$.

\s \label{II-FES} Let $x$ be  $M$-superficial \wrt \ $I$ and set  $N = M/xM$ and $u =xt \in \R_1$. Notice $L^I(M)/u L^I(M) = L^I(N)$. Let $I = (x_1,\ldots,x_m)$. There exists  $c_i \in A$ such that $x
= \sum_{i= 1}^{s}c_ix_i$. Set $X = \sum_{i= s}^{l}c_iX_i$.
For each $n \geq 1$ we have the following exact sequence of $A$-modules:
\begin{align*}
0 \xar \frac{I^{n+1}M\colon x}{I^nM} \xar \frac{M}{I^nM} &\xrightarrow{\psi_n} \frac{M}{I^{n+1}M} \xar \frac{N}{I^{n+1}N} \xar 0, \\
\text{where} \quad \psi_n(m + I^nM) &= xm + I^{n+1}M.
\end{align*}
This sequence induces the following  exact sequence of $\R$-modules:
\begin{equation}
\label{dagg}
0 \xar \B^{I}(x,M) \xar L^{I}(M)(-1)\xrightarrow{\Psi_X} L^{I}(M) \xrightarrow{\rho^x}  L^{I}(N)\xar 0,
\end{equation}
where $\Psi_X$ is left multiplication by $X$ and
\[
\B^{I}(x,M) = \bigoplus_{n \geq 0}\frac{(I^{n+1}M\colon_M x)}{I^nM}.
\]
We call (\ref{dagg}) the \emph{second fundamental exact sequence. }

\s \label{long-mod} Notice  $\lambda\left(\B^{I}(x,M) \right) < \infty$. A standard trick yields the following long exact sequence connecting
the local cohomology of $L^I(M)$ and
$L^I(N)$:
\begin{equation}
\label{longH}
\begin{split}
0 \xar \B^{I}(x,M) &\xar H^{0}(L^{I}(M))(-1) \xar H^{0}(L^{I}(M)) \xar H^{0}(L^{I}(N)) \\
                  &\xar H^{1}(L^{I}(M))(-1) \xar H^{1}(L^{I}(M)) \xar H^{1}(L^{I}(N)) \\
                 & \cdots \cdots \\
               \end{split}
\end{equation}

\s \label{Artin-vanish} We will use the following well-known result regarding *-Artinian modules quite often:

Let $L$ be a *-Artinian $\R$-module.
\begin{enumerate}[\rm (a)]
\item
If $\psi \colon L(-1) \rt L$ is a monomorphism then $L = 0$.
\item
If $\phi \colon L \rt L(-1)$ is a monomorphism then $L = 0$.
\end{enumerate}

\s \label{deg-vanish}
 A criterion for the vanishing of $H^{i}(L^I(M))_n$, the $n^{th}$-graded
component of $H^{i}(L^I(M))$ is the following (see \cite[8.3]{Pu5}):
Suppose for some $n$ and some $i$ with $0 \leq i \leq r -1$ the maps
\begin{align*}
H^i(\Pi)_n &\colon H^i\left(L^I(M) \right)_n \xar H^i\left(L^I(M) \right)_{n-1} \ \ \text{and} \\
H^i(\Psi_X)_n &\colon H^i\left(L^I(M) \right)_{n-1} \xar H^i\left(L^I(M) \right)_{n}  \ \ \text{are injective.}
\end{align*}
 Then $H^i(L^I(M))_n = 0$.

 \s \label{power-of-I} Set
 \begin{align*}
\xi_I(M) &:= \underset{0 \leq i \leq r-1}\min\{ \ i \ \mid H^{i}(L)_{-1} \neq 0 \ \text{or} \ \ell(H^{i}(L)) = \infty \}.\\
\amp_I(M) &:= \max\{\ |n| \ \mid H^{i}(L)_{n-1} \neq 0 \  \text{for} \ i = 0,\ldots, \xi_I(M) - 1 \}.
\end{align*}
In \cite[7.5]{Pu5} we showed that
\[
 \depth G_{I^l}(M) = \xi_I(M) \ \text{for all} \ l > \amp_I(M).
\]

\section{ Ratliff-Rush filtration mod a superficial element}

Let $M$ be an $A$-module with
 $\grade(I,M) \geq 2$.
Let $x \in I \setminus I^2$ is an
$M$-superficial \wrt \ $I.$ Set $N = M/xM$. Let $\rho^x \colon  M \rt N$ be the natural map.
Notice $\rho^x \left(\wt{I^{n} M} \right) \subseteq \left(\wt{I^{n} N} \right) \quad \text{for all} \ n \geq 1.$
We say the Ratliff-Rush filtration on $M$ \wrt \ $I$ \emph{behaves well mod $x$ }  if
$\rho^x \left(\wt{I^{n} M} \right)= \left(\wt{I^{n} N} \right)$ for all $n \geq 1$.
In this section we show that this is equivalent to $H^1(L^I(M)) = 0$.   Thus if the Ratliff-Rush filtration behaves well \wrt \
one superficial element then it behaves    well \wrt \ any superficial element.

\s \label{august}
 Notice $\rho^x$ induces the  maps
\[
\rho^x_n \colon  \frac{\wt{I^{n} M}}{I^n M} \longrightarrow \frac{\wt{I^{n} N}}{I^n N}  \quad \quad \quad   \text{for all} \ n \geq 1.
\]
Fix $n \geq 1$. It can be easily checked that  $\rho^x \left(\wt{I^{n} M} \right) = \left(\wt{I^{n} N} \right)$ \ff \ $\rho_n^x$ is \emph{surjective}.
The map  $\rho^x$ induces a natural $\R$-linear map $L^{I}(M) \rt L^I(N)$ which we also denote by $\rho^x$. Consider the induced map
\begin{align}
H^{0}(\rho^x) \colon H^0(L^I(M)) &\longrightarrow  H^0(L^I(N))  \\
\label{basicObs}\text{Notice} \quad H^{0}(\rho^x)_n &= \rho^x_n \ \ \text{for all} \ n \in \Z; \quad \text{(use \ref{zero-lc})}.
\end{align}

\s    Let $x$ be an $M$-superficial element \wrt \ $I$ and set $N = M/xM$.
  We
 have the following
exact sequence
\begin{equation}
\label{supexN}
 0 \xar \frac{(I^{n+1}M\colon_M x)}{I^nM} \xar \frac{\wt{I^nM}}{I^nM} \xrightarrow{\alpha_{n-1}^{x}}\frac{\wt{I^{n+1}M}}{I^{n+1}M} \xrightarrow{\rho_n} \frac{\wt{I^{n+1}N}}{I^{n+1}N};
\end{equation}
Here $\rho_n$ is the natural quotient map (defined since $\ov{\wt{I^nM}} \subseteq \ \wt{I^nN} $ for all $n \geq 0$.)
\begin{equation}
\label{supexN0}
\text{In particular} \quad \rho_0 \colon\frac{\wt{IM}}{IM} \xar \frac{\wt{IN}}{IN}  \quad \text{is injective.}
\end{equation}

The following result  shows that if Ratliff-Rush filtration behaves well mod one superficial element then it does so \emph{with any} superficial element.
\begin{theorem}\label{h1zero}
Let $(A,\m)$ be local with an infinite residue field and let $M$ be a \CM \ $A$-module of dimension $r \geq 2$. Let $I$ be an ideal of definition for $M$.
Let $x$ be  $M$-superficial  \wrt \ $I$. \TFAE
\begin{enumerate}[\rm (i)]
  \item The  Ratliff-Rush filtration on $M$ \wrt \ $I$ behaves well mod $x$.
  \item $H^{1}(L^{I}(M)) = 0$.
\end{enumerate}
\end{theorem}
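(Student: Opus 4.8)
The plan is to funnel both conditions through the single graded map $H^0(\rho^x)\colon H^0(L^I(M)) \to H^0(L^I(N))$ and then read off the conclusion from the long exact cohomology sequence \eqref{longH} attached to the second fundamental exact sequence \eqref{dagg}.

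First I would reformulate (i). By \ref{august} the Ratliff-Rush filtration behaves well mod $x$ precisely when every $\rho^x_n$ is surjective, and by \eqref{basicObs} the maps $\rho^x_n$ are exactly the graded components of $H^0(\rho^x)$. Since \ref{zero-lc} gives $H^0(L^I(N))_n = \wt{I^{n+1}N}/I^{n+1}N$, which vanishes for $n \leq -1$ (there $\wt{I^{n+1}N} = I^{n+1}N = N$), and since the bottom graded piece is governed by \eqref{supexN0}, condition (i) is equivalent to the statement that $H^0(\rho^x)$ is surjective in every degree, i.e.\ $\operatorname{coker} H^0(\rho^x) = 0$.

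Next I would extract the governing identity from \eqref{longH}. Exactness at $H^0(L^I(N))$ and at $H^1(L^I(M))(-1)$ yields
\[
\ker\bigl(H^1(\Psi_X)\colon H^1(L^I(M))(-1)\rt H^1(L^I(M))\bigr)\ \cong\ \operatorname{coker} H^0(\rho^x).
\]
From here (ii) $\Rightarrow$ (i) is immediate: if $H^1(L^I(M))=0$ the left side is zero, so $H^0(\rho^x)$ is onto and (i) holds. For (i) $\Rightarrow$ (ii) I would use (i) to make the right side vanish, so that $H^1(\Psi_X)$ is a monomorphism $L(-1)\rt L$ with $L = H^1(L^I(M))$. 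Since $r\geq 2$ we have $1\leq r-1$, so $L$ is *-Artinian by \ref{Artin}(a); then \ref{Artin-vanish}(a) applies verbatim and forces $L = 0$, which is (ii).

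The step I expect to be delicate is not the homological algebra, which is formal once \eqref{longH} is available, but making the reformulation in the second paragraph hold in \emph{every} degree. I must confirm that $\operatorname{coker} H^0(\rho^x)$ vanishes in the bottom and negative degrees too, so that $H^1(\Psi_X)$ is a genuine monomorphism rather than merely injective for $n\gg0$; this is exactly what licenses the *-Artinian vanishing \ref{Artin-vanish}(a) in one stroke instead of only giving $H^1(L^I(M))_n=0$ for large $n$. Carefully matching the degree shift $H^0(L^I(M))_n = \wt{I^{n+1}M}/I^{n+1}M$ against the indexing of the $\rho^x_n$, and invoking \eqref{supexN0} at the bottom, is where the care is concentrated.
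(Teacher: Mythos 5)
Your proposal is correct and follows essentially the same route as the paper: translate condition (i) into surjectivity of the graded map $H^0(\rho^x)$ via \ref{august} and \ref{basicObs}, read off from the long exact sequence \ref{longH} that this is equivalent to injectivity of $H^1(\Psi_X)\colon H^1(L^I(M))(-1)\rt H^1(L^I(M))$, and then invoke *-Artinianness (\ref{Artin}, \ref{Artin-vanish}) to force $H^1(L^I(M))=0$. Your explicit identification $\ker H^1(\Psi_X)\cong\operatorname{coker}H^0(\rho^x)$ and the care about negative degrees only make explicit what the paper's proof uses implicitly.
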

\begin{proof}
($ \Longrightarrow$)  By hypothesis and \ref{august} we get that  $\rho_n^x$ is \emph{surjective} for all $n \geq 1$. Using \ref{basicObs} we get that
the $\R$-linear map
 $$ H^0(\rho^x) \colon H^0 (L^I (M)) \rightarrow H^0 (L^I (M_1)) \quad \text{is surjective.}$$
  Using \ref{longH} we get that
 the $\R$-linear map
$$ H^1(\Psi^x) \colon H^1 (L^I (M)) (-1) \rightarrow H^1 (L^I (M)) \quad \text{ is injective}.$$
As $H^1 (L^I (M))$ is $*$-Artinian
we get $H^1 (L^I (M)) = 0$;  see (\ref{Artin-vanish}).

($ \Longleftarrow  $) If $H^1 (L^I (M)) = 0$ then the map $H^0(\rho^x )$ is surjective.
 Using \ref{basicObs} we get that  $\rho_n^x$ is \emph{surjective} for all $n \geq 1$.
So by \ref{august} it follows that the Ratliff-Rush filtration on $M$ \wrt \ $I$ behaves well mod $x$.
\end{proof}
As an immediate corollary we get
\begin{corollary}\label{invar-1}[with hypothesis as in \ref{h1zero}]
Suppose $x,y \in I $ are two \textbf{distinct elements} which are $M$-superficial \wrt \ $I$. \TFAE
\begin{enumerate}[\rm (i)]
  \item The  Ratliff-Rush filtration on $M$ \wrt \ $I$ behaves well mod $x$.
  \item The  Ratliff-Rush filtration on $M$ \wrt \ $I$ behaves well mod $y$.
  \end{enumerate}
\qed
\end{corollary}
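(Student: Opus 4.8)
The plan is to invoke Theorem \ref{h1zero} twice, exploiting the fact that the cohomological condition it supplies is intrinsic to the module $L^I(M)$ and makes no reference whatsoever to the superficial element employed. Concretely, condition (ii) of Theorem \ref{h1zero}, namely $H^1(L^I(M)) = 0$, depends only on the pair $(M,I)$; it is this decoupling that does all the work.

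First I would apply Theorem \ref{h1zero} to the superficial element $x$. Since $x$ is $M$-superficial \wrt\ $I$ and all the standing hypotheses of \ref{h1zero} are in force (that $(A,\m)$ is local with infinite residue field, that $M$ is \CM\ of dimension $r \geq 2$, and that $I$ is an ideal of definition for $M$), the theorem yields
\[
\text{(i)} \iff H^{1}(L^{I}(M)) = 0.
\]
Next I would apply the very same theorem, this time to the superficial element $y$, which is likewise $M$-superficial \wrt\ $I$ by hypothesis; this gives
\[
\text{(ii)} \iff H^{1}(L^{I}(M)) = 0.
\]
Chaining these two equivalences through their common middle term $H^{1}(L^{I}(M)) = 0$ produces (i) $\iff$ (ii), which is exactly the assertion of the corollary.

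I expect essentially no obstacle here: every substantive ingredient has already been absorbed into Theorem \ref{h1zero}, whose argument routes the behavior of the Ratliff-Rush filtration mod a superficial element through the \emph{fixed} graded module $H^{1}(L^{I}(M))$, thereby severing the characterization from any particular choice of element. The stipulation that $x$ and $y$ be distinct is inessential to the reasoning and is included only so that the statement is not vacuous. Indeed, the same two-line argument establishes the stronger conclusion advertised in the introduction: the Ratliff-Rush filtration behaves well mod \emph{some} $M$-superficial element if and only if it behaves well mod \emph{every} such element, since each of these conditions is equivalent to the single intrinsic vanishing $H^{1}(L^{I}(M)) = 0$.
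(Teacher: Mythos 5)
Your proposal is correct and is exactly the argument the paper intends: the corollary is stated as an immediate consequence of Theorem \ref{h1zero}, with both conditions equivalent to the element-independent vanishing $H^{1}(L^{I}(M)) = 0$. Nothing further is needed.
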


  \section{Two Examples}
  In this section we discuss two examples in detail. The examples are
\begin{enumerate}
\item
 A \CM \ module $M$ of dimension $2$ with
  $e_{2}^{I}(M) = 0$.
\item
    $M = A$ has dimension 2, the ideal $I$ is integrally closed and $e_{2}^{I}(A) = e_1^I(A) - e_0^I(A) + \ell(A/I)$.
  \end{enumerate}
  We prove that in both these cases $H^1(L^I(M)) = 0$. So by \ref{h1zero} the module $M$ behaves well \wrt \ a superficial element. By \ref{power-of-I} we also
  have that $G_I(M)$ is generalized \CM. We compute the St\"{u}ckrad-Vogel  invariant $\I(G_I(M))$ for these two examples. We also prove many preliminary results which we  need. \emph{These preliminary results
 are "well known" when $M =A$. Lack of a suitable reference has compelled me to  include it here.}

The following proposition  gives a convenient representation of the
 Hilbert coefficients of the Ratliff-Rush filtration when $M$ is \CM \ with $\dim M = 1$ or $2$.    It is a generalization of the corresponding ring
  case; see
\cite[Theorem 3]{It},
 \cite[p.\ 300]{RV2},
\cite[Equation 11]{RV3} and
\cite[1.9]{GuRossi}.

\begin{proposition}
\label{sigma}
Let $(A,\m)$ be a local ring,
  $M$ a Cohen-Macaulay   $A$-module  of dimension $r = 1 $ or $2$ and let $I$ be an ideal of definition
of $M$. Let $J = (x_1,x_{\dim M})$ be a minimal reduction of $M$ \wrt \ $I$.
Set $\sigma_{i}^{I}(M) = \lambda(\wt{I^{j+1}M}/J\wt{I^{j}M})$.  We  have
\begin{enumerate}[\rm (1) \quad]
\item
$\Delta^{r-1}\left( \wt{H}_{I}(M,n) \right) = e^{I}_{0}(M) - \sigma_{n}^{I}(M)$.
\item
 $\wt{h}_{M}^{I}(z) = (e^{I}_{0}(M) -  \sigma_{0}^{I}(M)) + \sum_{i\geq 1}\left( \sigma_{i-1}^{I}(M) -  \sigma_{i}^{I}(M)\right)z^i$.
\item
$\wt{e}^{I}_{k}(M)= \sum_{j \geq k -1}\binom{j}{k-1}\sigma_{j}^{I}(M)$ for each $k \geq 1$.
\end{enumerate}
\end{proposition}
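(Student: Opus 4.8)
The plan is to reduce all three parts to part~(1) and then prove~(1) by dévissage on $r=\dim M$. Parts~(2) and~(3) are purely formal consequences of~(1). Writing $\Delta f(n)=f(n)-f(n-1)$ with the convention $\wt{H}_I(M,-1)=0$, part~(1) reads $\Delta^{r-1}\wt{H}_I(M,n)=e^I_0(M)-\sigma^I_n(M)$; summing this against $z^n$ and using $\wt{h}_I(M,z)=(1-z)^r\sum_{n\ge 0}\wt{H}_I(M,n)z^n$ collapses the factors $(1-z)^{r-1}$ and yields the closed form
\[
\wt{h}_I(M,z) = e^I_0(M) - (1-z)\sum_{i\ge 0}\sigma^I_i(M)z^i,
\]
which expands term-by-term to~(2). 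For~(3) I would write $\wt{h}_I(M,z)=\sum_i h_iz^i$ and use $\wt{e}^I_k(M)=\wt{h}_I(M,z)^{(k)}(1)/k!=\sum_i\binom{i}{k}h_i$; substituting $h_0=e^I_0(M)-\sigma^I_0(M)$ and $h_i=\sigma^I_{i-1}(M)-\sigma^I_i(M)$ for $i\ge 1$ and telescoping with Pascal's identity $\binom{i}{k}-\binom{i-1}{k}=\binom{i-1}{k-1}$ gives $\wt{e}^I_k(M)=\sum_{j\ge k-1}\binom{j}{k-1}\sigma^I_j(M)$. These steps are routine bookkeeping, so the content lies entirely in~(1).

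Throughout I write $F_n=\wt{I^nM}$ (a good $I$-filtration with $F_0=M$) and use the elementary fact, call it Lemma~A: if $x$ is $M$-regular and $F\subseteq M$ with $\lambda(M/F)<\infty$, then $\lambda(F/xF)=\lambda(M/xM)$ (apply the snake lemma to $0\to F\to M\to M/F\to 0$ and multiplication by $x$; the finite-length module $M/F$ contributes equal kernel and cokernel lengths). For $r=1$, $J=(x_1)$ with $x_1$ $M$-regular and $x_1F_n\subseteq F_{n+1}\subseteq F_n$; the chain gives $\lambda(F_n/x_1F_n)=\lambda(F_n/F_{n+1})+\lambda(F_{n+1}/x_1F_n)$, and Lemma~A turns the left side into $\lambda(M/x_1M)=e^I_0(M)$, so $e^I_0(M)=\wt{H}_I(M,n)+\sigma^I_n(M)$, which is~(1). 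I note that this computation used only the chain $\ov{x}F_n\subseteq F_{n+1}\subseteq F_n$ and Lemma~A, hence applies verbatim to \emph{any} good filtration on a \CM \ module of dimension $1$ equipped with a regular reduction element.

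For $r=2$ I would choose the minimal reduction so that $x_1$ is $M$-superficial (legitimate after the base change~\ref{AtoA'}), set $N=M/x_1M$ (a \CM \ module of dimension $1$), and let $G_n\subseteq N$ be the image of $F_n$. Since $\ov{x_2}$ is $N$-regular, $(\ov{x_2})$ is a reduction of $I$ on $N$, and $\ov{x_2}G_n\subseteq G_{n+1}\subseteq G_n$, the dimension-$1$ computation applies to $\{G_n\}$ and gives $\lambda(G_n/G_{n+1})=e^I_0(N)-\lambda(G_{n+1}/\ov{x_2}\,G_n)$, with $e^I_0(N)=e^I_0(M)$ by~\ref{mod-sup-h}. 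The remaining task is to identify the two sides with $\Delta\wt{H}_I(M,n)$ and $\sigma^I_n(M)$, and the single tool for this is the colon formula $(\,\wt{I^{n+1}M}:_M x_1\,)=\wt{I^nM}$ for all $n\ge 0$, which follows from the characterization $\wt{I^nM}=\bigcup_k(I^{n+k}M:_M x_1^{k})$ of the Ratliff-Rush filtration via a superficial element. Using it twice: first, a direct computation presents $G_n/G_{n+1}$ as $F_n$ modulo $F_{n+1}+(F_n\cap x_1M)$, and the multiplication-by-$x_1$ isomorphism rewrites the defect as $\lambda\big((F_n:_Mx_1)/(F_{n+1}:_Mx_1)\big)=\lambda(F_{n-1}/F_n)$, so $\lambda(G_n/G_{n+1})=\wt{H}_I(M,n)-\wt{H}_I(M,n-1)=\Delta\wt{H}_I(M,n)$; second, $G_{n+1}/\ov{x_2}\,G_n\cong F_{n+1}/\big(F_{n+1}\cap(x_2F_n+x_1M)\big)$, and the colon formula forces $F_{n+1}\cap(x_2F_n+x_1M)=JF_n$ (if $x_2f+x_1m\in F_{n+1}$ then $x_1m\in F_{n+1}$, hence $m\in F_n$), so this length is exactly $\sigma^I_n(M)$. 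Combining the two gives~(1) for $r=2$.

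The hard part will be precisely these two identifications: a priori the filtration $\{G_n\}$ on $N$ is \emph{not} the Ratliff-Rush filtration of $N$, and $JF_n$ need not be saturated inside $F_{n+1}\cap(x_2F_n+x_1M)$, so both sides of the dimension-$1$ identity carry discrepancy terms. What makes the argument work is that both discrepancies are governed by the \emph{same} colon identity $(\,\wt{I^{n+1}M}:_M x_1\,)=\wt{I^nM}$, so it suffices to establish this once (checking in particular the boundary case $n=0$, where $(\,\wt{IM}:_M x_1\,)=M$ because $x_1M\subseteq IM\subseteq\wt{IM}$). Securing this colon formula for the Ratliff-Rush filtration is the crux; once it is in hand, the matching of terms and the passage to parts~(2) and~(3) are mechanical.
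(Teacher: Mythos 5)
Your proposal is correct, and the reductions of (2) and (3) to (1) match the paper's (which simply declares them immediate). But for the substantive case $r=2$ you take a genuinely different route. The paper does not pass to $N=M/x_1M$ at all: it runs Huneke's device, namely the exact sequence
\[
0 \xar \frac{M}{\wt{I^nM} \colon (x_1,x_2)} \xar \left(\frac{M}{\wt{I^nM}}\right)^2 \xar \frac{JM}{J\wt{I^nM}} \xar 0,
\]
combined with the two-element colon identity $\wt{I^nM}\colon (x_1,x_2)=\wt{I^{n-1}M}$ and $\lambda(JM/J\wt{I^nM})=\lambda(M/J\wt{I^nM})-e_0^I(M)$; a single length count on this sequence produces $\Delta\wt{H}_I(M,n)=e_0^I(M)-\sigma_n^I(M)$ directly in dimension two. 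You instead push the Ratliff-Rush filtration down to the image filtration $\{G_n\}$ on $N$, run the one-dimensional identity there, and then pay for the fact that $\{G_n\}$ is not the Ratliff-Rush filtration of $N$ by invoking the one-element colon formula $\wt{I^{n+1}M}\colon_M x_1=\wt{I^nM}$ twice, once to see $\lambda(G_n/G_{n+1})=\Delta\wt{H}_I(M,n)$ and once to see $F_{n+1}\cap(x_2F_n+x_1M)=JF_n$. Both arguments ultimately rest on a colon identity for the Ratliff-Rush filtration of a superficial element (yours the one-element version, the paper's the two-element version, which follows from yours applied to each $x_i$), and both implicitly need $J$ to be generated by a superficial sequence, so neither is more demanding in hypotheses. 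What the paper's approach buys is brevity and symmetry in $x_1,x_2$; what yours buys is a more elementary, Koszul-free argument that makes explicit why the (generally non-Ratliff-Rush) image filtration on $N$ is good enough, and your two identifications are exactly the right lemmas and do go through as you sketch them. I see no gap.
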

\begin{proof}
Clearly (2) follows from (1) and (3) follows from (2). When $\dim M = 1$ the proof
given in  \cite[Equation 2]{RV4} for $M =A$ can be easily generalized.

When $\dim M = 2$ we use a technique due to Huneke \cite[2.4]{Hun}. Consider the
exact sequence:
\[
0 \xar \frac{M}{\wt{I^nM} \colon (x_1,x_2)} \xrightarrow{\alpha} \left(\frac{M}{\wt{I^nM}}\right)^2 \xrightarrow{\beta} \frac{JM}{J\wt{I^nM}} \xar 0,
\]
where
\begin{align*}
\alpha( m + \wt{I^nM} \colon (x_1,x_2) &=  (-x_2m + \wt{I^nM}, x_1m + \wt{I^nM}) \ \text{and} \\
\beta(m_1 + \wt{I^nM}, m_2 + \wt{I^nM}) &= x_1m_1 + x_2m_2 + J\wt{I^nM}.
\end{align*}
Notice that
\[
\wt{I^nM} \colon (x_1,x_2) = \wt{I^{n-1}M}, \quad \text{and} \ \lambda(JM/J\wt{I^nM}) = \lambda(M/J\wt{I^nM}) - e_{0}^{I}(M).
\]
So by using the exact sequence above we get that
\begin{align*}
2\lambda(M/\wt{I^nM}) &= \lambda(M/\wt{I^{n-1}M}) +  \lambda(M/J\wt{I^nM}) - e_{0}^{I}(M) \\
                      &= \lambda(M/\wt{I^{n-1}M}) +  \lambda(M/\wt{I^nM}) + \sigma_{n}^{I}(M)  - e_{0}^{I}(M).
\end{align*}
So it follows that $ e_{0}^{I}(M)  =  \wt{H}_{I}(M,n) - \wt{H}_{I}(M,n-1) + \sigma_{n}^{I}(M)$.
This proves (1).
\end{proof}

An easy consequence of the previous proposition is the following:
\begin{corollary}
\label{sigmac}
(With the same hypothesis as above)
We have
\begin{enumerate}[\rm (1)]
\item
$\chi_{1}^{I}(M) \geq  \lambda(\wt{IM}/IM )$ with equality \ff \ $\wt{I^{j+1}M} =J\wt{I^{j}M}$ for all $j \geq 1$.
\item
When $M = A$ and $I$ is integrally closed then
$ \wt{e_2}^{I}(A) \geq \chi_{1}^{I}(A).$
\end{enumerate}
\begin{proof}
Both the assertions follow from Proposition \ref{sigma} and  the fact
$\sigma_{0}^{I}(M) = e_{0}^{I}(M) - \lambda(M/IM) + \lambda(\wt{IM}/IM ).$
\end{proof}
\end{corollary}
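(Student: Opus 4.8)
The plan is to derive both inequalities directly from Proposition \ref{sigma}(3), which writes each $\wt{e}^{I}_{k}(M)$ as a \emph{nonnegative} combination of the numbers $\sigma_{j}^{I}(M)$, combined with the coincidence $e^{I}_{i}(M) = \wt{e}^{I}_{i}(M)$ for $i \leq r$ recorded in \ref{rrhilbertcoeff}(a). The linchpin I would establish first is the identity $\sigma_{0}^{I}(M) = e^{I}_{0}(M) - \lambda(M/IM) + \lambda(\wt{IM}/IM)$. Since $\wt{I^{0}M} = M$, we have $\sigma_{0}^{I}(M) = \lambda(\wt{IM}/JM)$, and the chain $JM \subseteq IM \subseteq \wt{IM}$ splits this length additively as $\lambda(\wt{IM}/IM) + \lambda(IM/JM)$. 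Because $M$ is \CM \ and $J$ is a minimal reduction generated by a system of parameters, $\lambda(M/JM) = e^{I}_{0}(M)$, whence $\lambda(IM/JM) = e^{I}_{0}(M) - \lambda(M/IM)$, giving the identity.

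For part (1) I would expand $\chi_{1}^{I}(M) = e_{1}^{I}(M) - e_{0}^{I}(M) + \lambda(M/IM)$ and replace $e_{1}^{I}(M)$ by $\wt{e}_{1}^{I}(M) = \sum_{j \geq 0}\sigma_{j}^{I}(M)$, which is the $k=1$ instance of Proposition \ref{sigma}(3) together with \ref{rrhilbertcoeff}(a). Peeling off the $j=0$ summand and substituting the identity above, the terms $e_{0}^{I}(M)$ and $\lambda(M/IM)$ cancel, leaving $\chi_{1}^{I}(M) = \lambda(\wt{IM}/IM) + \sum_{j \geq 1}\sigma_{j}^{I}(M)$. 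As every $\sigma_{j}^{I}(M) \geq 0$, the asserted inequality follows at once, with equality exactly when $\sigma_{j}^{I}(M) = 0$ for all $j \geq 1$, i.e. when $\wt{I^{j+1}M} = J\wt{I^{j}M}$ for all $j \geq 1$.

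For part (2) I would specialize to $M = A$, $\dim A = 2$, with $I$ integrally closed. The extra input is that the Ratliff-Rush closure lies inside the integral closure, so $\wt{I} \subseteq \ov{I} = I$, forcing $\wt{IA} = IA$ and hence $\lambda(\wt{IA}/IA) = 0$. The formula from part (1) then collapses to $\chi_{1}^{I}(A) = \sum_{j \geq 1}\sigma_{j}^{I}(A)$, while the $k=2$ case of Proposition \ref{sigma}(3) gives $\wt{e}_{2}^{I}(A) = \sum_{j \geq 1} j\,\sigma_{j}^{I}(A)$. Comparing these term by term, using $j \geq 1$ and $\sigma_{j}^{I}(A) \geq 0$, yields $\wt{e}_{2}^{I}(A) \geq \chi_{1}^{I}(A)$.

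The manipulations of the $\sigma_{j}$ are purely formal; the only places needing care are the two structural facts feeding the $\sigma_{0}$ identity and part (2): that $\lambda(M/JM) = e_{0}^{I}(M)$ for a \CM \ module against a minimal reduction, and the containment $\wt{I} \subseteq \ov{I}$. I expect the former — that a minimal reduction of a \CM \ module is generated by a maximal regular sequence computing the multiplicity, so that $\lambda(M/JM)$ really equals $e_{0}^{I}(M)$ — to be the main point to pin down, everything else being a telescoping of the $\sigma$'s.
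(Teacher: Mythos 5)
Your proposal is correct and follows essentially the same route as the paper: the paper's proof is precisely the observation that everything follows from Proposition \ref{sigma} together with the identity $\sigma_{0}^{I}(M) = e_{0}^{I}(M) - \lambda(M/IM) + \lambda(\wt{IM}/IM)$, which you verify via the chain $JM \subseteq IM \subseteq \wt{IM}$ and $\lambda(M/JM) = e_{0}^{I}(M)$. Your write-up simply makes explicit the telescoping of the $\sigma_{j}$'s and the use of $\wt{I} \subseteq \ov{I} = I$ that the paper leaves implicit.
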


\begin{proposition}
\label{e2}
Let $(A,\m)$ be a Noetherian local ring with infinite residue field,  $M$  a $2$-dimensional \CM \ $A$-module and let
$I$ be an $\m$-primary ideal. Let $x,y$ be an  $M$-superficial sequence \wrt \ $I$. Set
$J =(x,y)$ and $N = M/xM$.
If $e_{2}^{I}(M) = 0$ then
\begin{enumerate}[\rm (i)]
\item
$\wt{I^{i+1}M} = J\wt{I^{i}M}$  for all   $i \geq 1$.
\item
$\wt{I^{i+1}M } \subseteq I^iM$ for all  $i \geq 1$.
\item
$\ov{\wt{I^iM}} = \wt{I^iN}$ for all $i \geq 1$.
\end{enumerate}
\end{proposition}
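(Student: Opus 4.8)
The plan is to read off (i) from the explicit formula for $\wt{e}_2^I(M)$, deduce (ii) from (i) by a short ascending induction, and then obtain (iii) by a \emph{descending} induction passed down from the stable range through the quotient $N$. The first two parts are essentially bookkeeping; the genuine content is (iii).

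For (i), note first that $J = (x,y)$ is a minimal reduction of $M$ \wrt\ $I$, since $x,y$ is a superficial sequence and the residue field is infinite, so Proposition \ref{sigma} applies. By \ref{rrhilbertcoeff}(a) we have $e_2^I(M) = \wt{e}_2^I(M)$, and Proposition \ref{sigma}(3) with $k = 2$ gives
\[
\wt{e}_2^I(M) = \sum_{j \geq 1} \binom{j}{1}\sigma_j^I(M) = \sum_{j \geq 1} j\,\sigma_j^I(M).
\]
Every $\sigma_j^I(M) = \lambda(\wt{I^{j+1}M}/J\wt{I^jM})$ is a non-negative integer and every coefficient $j \geq 1$ is positive, so $e_2^I(M) = 0$ forces $\sigma_j^I(M) = 0$, i.e. $\wt{I^{j+1}M} = J\wt{I^jM}$, for all $j \geq 1$; this is (i).

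Part (ii) then follows by induction on $i$. For $i = 1$, $\wt{I^2M} = J\wt{IM} \subseteq JM \subseteq IM$ because $\wt{IM} \subseteq M$ and $J \subseteq I$. If $\wt{I^{i+1}M} \subseteq I^iM$, then (i) gives $\wt{I^{i+2}M} = J\wt{I^{i+1}M} \subseteq J\,I^iM \subseteq I^{i+1}M$, completing the induction.

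The main step is (iii). Set $F_j = \ov{\wt{I^jM}} = \rho^x(\wt{I^jM})$; one always has $F_j \subseteq \wt{I^jN}$, and (iii) is precisely the reverse containment $F_j = \wt{I^jN}$ for each $j \geq 1$. Applying $\rho^x$ to the identity (i) and using $xN = 0$ (so $JF_j = yF_j$) produces the recursion $F_{j+1} = yF_j$ for $j \geq 1$. Since $\wt{I^jM} = I^jM$ and $\wt{I^jN} = I^jN$ for $j \gg 0$ while $\rho^x(I^jM) = I^jN$, we have $F_j = \wt{I^jN}$ for all $j \gg 0$. Now $y$ is a nonzerodivisor on $N$ (a superficial element of the positive-dimensional module $N$ is $N$-regular), so I can descend: if $F_{j+1} = \wt{I^{j+1}N}$ for some $j \geq 1$, then from $y\wt{I^jN} \subseteq \wt{I^{j+1}N} = F_{j+1} = yF_j \subseteq y\wt{I^jN}$ all inclusions are equalities, hence $y\wt{I^jN} = yF_j$, and cancelling the regular element $y$ gives $\wt{I^jN} = F_j$. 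Running this down from the stable range $j \gg 0$ to $j = 1$ yields (iii). The part I expect to need the most care is exactly this passage from the one-sided recursion $F_{j+1} = yF_j$ to an \emph{equality} of filtrations for every $j \geq 1$: it is the combination of stabilization at large $j$ with cancellation of the regular element $y$ that does the work. As an alternative one could instead verify $H^1(L^I(M)) = 0$ and invoke Theorem \ref{h1zero}, but the descending induction above is self-contained.
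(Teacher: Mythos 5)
Your proof is correct. Parts (i) and (ii) are exactly the paper's argument: $e_2^I(M)=\wt{e}_2^I(M)=\sum_{j\geq 1}j\,\sigma_j^I(M)$ forces $\sigma_j^I(M)=0$ for $j\geq 1$, and (ii) is the evident induction. For (iii) you take a genuinely different route. The paper anchors an \emph{ascending} induction at $n=1$: combining \ref{mod-sup-h}(3), Corollary \ref{sigmac}(1) and the injectivity of $\rho_0$ from (\ref{supexN0}), it gets the chain $\lambda(\wt{IM}/IM)=\chi_1^I(M)=\chi_1^I(N)\geq \lambda(\wt{IN}/IN)\geq \lambda(\wt{IM}/IM)$, so all are equalities; this yields both $\ov{\wt{IM}}=\wt{IN}$ (the base case) and $\wt{I^{i+1}N}=y\wt{I^iN}$ for $i\geq 1$, after which (i) propagates the surjectivity upward. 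You instead anchor at $n\gg 0$, where both filtrations stabilize to the $I$-adic one so that $F_j=\wt{I^jN}$ automatically, and \emph{descend} via the recursion $F_{j+1}=yF_j$ together with cancellation of $y$: since $F_j\subseteq\wt{I^jN}$, $y\wt{I^jN}\subseteq\wt{I^{j+1}N}=F_{j+1}=yF_j$ forces $y\wt{I^jN}=yF_j$, and $y$ is a nonzerodivisor on the one-dimensional \CM\ module $N$ (a superficial element on a module of positive depth is regular), so $\wt{I^jN}=F_j$. This is complete and self-contained; it bypasses the $\chi_1$ bookkeeping and Corollary \ref{sigmac} entirely for part (iii), at the small cost of invoking stabilization of the Ratliff-Rush filtration on $N$ and the regularity of $y$. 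The paper's route produces the identity $\wt{I^{i+1}N}=y\wt{I^iN}$ as an explicit intermediate fact, but your argument recovers the same identity a posteriori from $F_{j+1}=yF_j$ once (iii) is known, so nothing is lost.
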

\begin{proof}
Set $\si_{i} = \lambda\left(\wt{I^{i+1}M}/J\wt{I^{i}M}\right)$ for $i \geq 0$.
Since $ \sum_{i \geq 1}i\si_i = e_{2}^{I}(M) $ we get that $\si_i = 0$ for all $i \geq 1$.
Therefore
\begin{equation}
\label{p1e1}
 \wt{I^{i+1}M} = J\wt{I^{i}M} \ \  \text{ for all} \  i \geq 1.
\end{equation}
This proves (i). The assertion (ii) follows easily from (i).

(iii) Using Corollary \ref{sigmac}(1) we get
\begin{equation}
\label{p1e2}
\chi_{1}^{I}(M) = \lambda\left(\wt{IM}/IM\right).
\end{equation}
Using \ref{mod-sup-h}(3), \ref{sigmac}(1) and (\ref{supexN0})  we get the following inequalities:
\begin{equation}
\label{p1e3}
 \chi_{1}^{I}(M) = \chi_{1}^{I}(N) \geq \lambda\left(\wt{IN}/IN\right) \geq \lambda\left(\wt{IM}/IM\right).
\end{equation}

Therefore by equations (\ref{p1e2}) and (\ref{p1e3}) we have

(a) $\lambda\left(\wt{IM}/IM\right) = \lambda\left(\wt{IN}/IN\right)$.

(b) $ \chi_{1}^{I}(N) = \lambda\left(\wt{IN}/IN\right)$.

(c)  From (a) and (\ref{supexN0})  we get that $\ov{\wt{IM}} = \wt{IN}$.

(d)  From (b) and \ref{sigmac}(1)  we get that   $\wt{I^{i+1}N} = y\wt{I^{i}N}$ for all $i \geq 1$.

We get the required result  from  (\ref{p1e1}), (c), and (d).
\end{proof}

\begin{proposition}
\label{chi2}
Let $(A,\m)$ be a $2$-dimensional \CM \  local ring with infinite residue field and let
$I$ be an $\m$-primary integrally closed ideal. Let $x$ be $A$-superficial \wrt \ $I$ such that
$ I/(x)$ is integrally closed ideal in $B= A/(x)$.
If $e_{2}^{I}(A) =  e_{1}^{I}(A) - e_{0}^{I}(A) + \lambda(A/I)$ then
\begin{enumerate}[\rm (i)]
\item
$\wt{I^{i+1}} = J\wt{I^{i}}$    for all   $i \geq 2$.
\item
$\wt{I^{i+1}} \subseteq I^i$ for all $i \geq 1$.
\item
 $\ov{\wt{I^iA}} = \wt{I^iB}$ for all
$i \geq 1$.
\end{enumerate}
\end{proposition}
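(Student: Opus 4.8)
The plan is to reduce everything to the Ratliff--Rush $\sigma$-invariants $\sigma_j^I(A)=\lambda(\wt{I^{j+1}}/J\wt{I^j})$ of Propositions \ref{sigma}--\ref{sigmac} and their mod-$x$ analogues $\sigma_j^I(B)=\lambda(\wt{I^{j+1}B}/y\wt{I^jB})$ for $B=A/(x)$, where $J=(x,y)$ and $x,y$ is an $A$-superficial sequence. The decisive simplification is that, since $I$ and $I/(x)$ are integrally closed and the Ratliff--Rush closure always lies inside the integral closure, we have $\wt{I}=I$ and $\wt{IB}=IB$, so $\lambda(\wt{I}/I)=\lambda(\wt{IB}/IB)=0$. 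First I would prove (i): using $\wt{e}_2^I(A)=e_2^I(A)$ from \ref{rrhilbertcoeff} together with the expansions of $\wt{e}_2^I(A)$ and $\chi_1^I(A)$ in the $\sigma_j^I(A)$ provided by Proposition \ref{sigma} and Corollary \ref{sigmac}, the hypothesis $e_2^I(A)=\chi_1^I(A)$ rewrites as $\sum_{j\ge 2}(j-1)\sigma_j^I(A)=\lambda(\wt{I}/I)=0$. As each $\sigma_j^I(A)\ge 0$, this forces $\sigma_j^I(A)=0$, i.e. $\wt{I^{j+1}}=J\wt{I^j}$, for all $j\ge 2$, which is (i). In contrast with Proposition \ref{e2} we cannot conclude $\sigma_1^I(A)=0$, which is why the range starts at $i=2$.

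Part (ii) then follows by a short induction. For $i=1$, $\wt{I^2}\sub\ov{I^2}\sub\ov{I}=I$, using monotonicity of the integral closure and $\ov{I}=I$. For $i\ge 2$, (i) gives $\wt{I^{i+1}}=J\wt{I^i}\sub I\wt{I^i}\sub I\cdot I^{i-1}=I^i$, the last inclusion being the inductive hypothesis $\wt{I^i}\sub I^{i-1}$.

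The crux is (iii), and here the naive induction used for Proposition \ref{e2}(iii) breaks down: there $\wt{I^{i+1}M}=J\wt{I^iM}$ held for all $i\ge 1$, whereas our (i) only starts at $i\ge 2$, and we no longer have $\chi_1^I(A)=\lambda(\wt{I}/I)$, so the chain-of-inequalities argument of \ref{e2} no longer pins down the mod-$x$ behaviour. Instead I would argue by a monotonicity estimate on the defect $D_n:=\lambda(\wt{I^nB}/\ov{\wt{I^nA}})\ge 0$, where $\ov{\wt{I^nA}}$ is the image of $\wt{I^nA}$ in $B$; recall $\ov{\wt{I^nA}}\sub\wt{I^nB}$, and $D_n=0$ is exactly the assertion $\ov{\wt{I^nA}}=\wt{I^nB}$. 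The base case $D_1=0$ is immediate: $\wt{IA}=I$ maps onto $IB=\wt{IB}$. For $n\ge 2$, (i) gives $\ov{\wt{I^{n+1}A}}=\ov{J\wt{I^nA}}=y\,\ov{\wt{I^nA}}$ since $x\mapsto 0$ in $B$; inserting this and $y\wt{I^nB}\sub\wt{I^{n+1}B}$ into the chain $y\,\ov{\wt{I^nA}}\sub y\wt{I^nB}\sub\wt{I^{n+1}B}$ and using that $y$ is a nonzerodivisor on $B$ (a $1$-dimensional \CM\ ring) yields
\[
D_{n+1}=\lambda\!\left(\wt{I^{n+1}B}/y\wt{I^nB}\right)+\lambda\!\left(y\wt{I^nB}/y\,\ov{\wt{I^nA}}\right)=\sigma_n^I(B)+D_n .
\]
Thus $D_2\le D_3\le\cdots$, while $D_n=0$ for $n\gg 0$ because both filtrations stabilise to the $I$-adic one; a nonnegative nondecreasing sequence that is eventually $0$ vanishes, so $D_n=0$ for $n\ge 2$, and with $D_1=0$ we obtain (iii).

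The step I expect to require the most care is precisely this telescoping identity: one must check that $y$ acts as a nonzerodivisor so that $\lambda(y\wt{I^nB}/y\,\ov{\wt{I^nA}})=D_n$, that $y\wt{I^nB}\sub\wt{I^{n+1}B}$ (the Ratliff--Rush filtration on $B$ is multiplicative and $\wt{IB}=IB$), and that $\ov{\wt{I^{n+1}A}}=y\,\ov{\wt{I^nA}}$ genuinely invokes (i) and hence holds only for $n\ge 2$ — which is exactly why the integrally closed hypothesis on $I/(x)$ is indispensable to secure the base case $D_1=0$ separately. Equivalently, (iii) says the Ratliff--Rush filtration behaves well mod $x$, so by Theorem \ref{h1zero} it is the same as $H^1(L^I(A))=0$; the argument above establishes it directly, without computing local cohomology.
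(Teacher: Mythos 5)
Your proof is correct, and while parts (i) and (ii) coincide with the paper's argument (the hypothesis $e_2^I(A)=\chi_1^I(A)$ together with the $\sigma$-expansions of Proposition \ref{sigma} and $\wt{I}=I$ forces $\sigma_j^I(A)=0$ for $j\geq 2$, and (ii) is the same induction), your proof of (iii) takes a genuinely different route. The paper works cohomologically: it extracts $b=\lambda(\B^{I}(x,A))\leq \lambda(H^0(L^I(B)))=:r$ from the second fundamental exact sequence, then uses $e_2^I(B)=e_2^I(A)+b$, the identity $e_2^I(B)=\wt{e}_2^I(B)+r$, and the inequality $\wt{e}_2^I(B)\geq\chi_1^I(B)$ of Corollary \ref{sigmac}(2) (this is where the integral closedness of $I/(x)$ enters) to force $b=r$, which is exactly surjectivity of $H^0(L^I(A))\to H^0(L^I(B))$. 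You instead run a direct, cohomology-free telescoping on the defects $D_n=\lambda(\wt{I^nB}/\ov{\wt{I^nA}})$: the recursion $D_{n+1}=\sigma_n^I(B)+D_n$ for $n\geq 2$ (valid since (i) gives $\ov{\wt{I^{n+1}A}}=y\,\ov{\wt{I^nA}}$ there, and $y$ is a nonzerodivisor on the $1$-dimensional \CM\ ring $B$) makes $(D_n)_{n\geq 2}$ nondecreasing, nonnegative and eventually zero, hence zero, while the integral closedness of $I/(x)$ secures $D_1=0$. Both arguments locate the role of the two integral-closure hypotheses in the same places, but yours is more elementary and self-contained (no local cohomology, no Hilbert-coefficient comparison for $B$), whereas the paper's fits the $L^I(M)$ machinery that the rest of the article is built on and immediately yields $H^1(L^I(A))=0$ via Theorem \ref{h1zero} --- a fact you recover only as a corollary of (iii).
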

\begin{proof}
The first assertion follows
from the expression of $e_{1}^{I}(A)$ and $e_{2}^{I}(A)$
as given in  \ref{sigma}.

(ii) We prove  this by induction.
First note that $\wt{I^2} \subseteq \wt{I}$. Since $I$ is integrally closed
we have $\wt{I} = I$. So the result holds for $i =1$.
 We assume it for $i = l - 1$ and prove for $i= l$ (here
$l \geq 2$).
For $l \geq 2$ by
(i) we have  $\wt{I^{l+1}} = J\wt{I^{l}}$. Since by induction hypothesis $ \wt{I^l}\subseteq I^{l-1}$
we get that
 $\wt{I^{l+1}} \subseteq I^l$. This proves (ii).

 (iii) We use part of
 (\ref{longH})
\begin{equation*}
0 \xar \B^{I}(x,A) \xar H^0(L^{I}(A))(-1)\xar H^0(L^{I}(A))  \xar H^0(L^{I}(B)).
\end{equation*}
Therefore we have that
\begin{equation}
\label{p2e1}
b =  \lambda\left(\B^{I}(x,A)\right) \leq  \lambda\left(\  H^0(L^{I}(B)) \right) = r.
\end{equation}
Note that if $b = r $ the natural map $H^0(L^{I}(A))  \xar H^0(L^{I}(B) $ is surjective and this will
imply $\ov{\wt{I^iA}} = \wt{I^iB}$ for all
$i \geq 1$.

Using \ref{mod-sup-h}(3)  we have
\begin{align*}
 e_{2}^{I}(B) &= e_{2}^{I}(A) + b \\
              &=   e_{1}^{I}(A) - e_{0}^{I}(A) + \lambda(A/I) + b \\
              &=   e_{1}^{I}(B) - e_{0}^{I}(B) + \lambda(B/IB) + b.
\end{align*}

Note that $ e_{2}^{I}(B) = \wt{ e_{2}}^{I}(B) +  r $.
Therefore
\[
\wt{ e_{2}}^{I}(B)  =  e_{1}^{I}(B) - e_{0}^{I}(B)   + \lambda(B/IB) + b - r.
\]
Since $I/(x)$ is integrally closed ideal  of $B$ we get from  Corollary \ref{sigmac}(2)
that
$ \wt{ e_{2}}^{I}(B) \geq  e_{1}^{I}(B) - e_{0}^{I}(B) + \lambda(B/IB) $.
Therefore   $b \geq r$. Using this and (\ref{p2e1}) we conclude that $b = r$ and so the
 result follows.
\end{proof}

\begin{theorem}
\label{e2dim2main}
Let $A$ be local and let
$M$ be  a finite Cohen-Macaulay $A$-module of dimension $ r = 2$. Let $I$ be an
ideal of definition for $M$.
 Assume \emph{ any one of the following} conditions hold
\begin{enumerate}[\rm (1)]
\item
$e_{2}^{I}(M) = 0$
\item
$M = A$, the ideal $I$ is integrally closed and $e_{2}^{I}(A) = e_{1}^{I}(A)-  e_{0}^{I}(A) + \ell(A/I)$.
\end{enumerate}
Then
\begin{enumerate}[\rm (a)]
\item
$H^1(L^I(M)) = 0$.
\item
 $G_{I^n}(M)$ is \CM \ for all $n \gg 0$.
 \item
  $G_I(M)$ is generalized \CM \ with $\mathbb{I}(G_I(M)) = -2e_{3}^{I}(M)$.
  \item
  If $I = \m$ then
$G_{\m}(M)$ is a quasi-Buchsbaum $G_{\m}(A)$-module.
\end{enumerate}
\end{theorem}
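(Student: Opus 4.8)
The plan is to make part~(a) the engine that drives (b)--(d). For (a) I would, after the harmless base change of \ref{AtoA'} to assume the residue field infinite, simply feed the structural conclusions already in hand into Theorem~\ref{h1zero}. In case~(1), Proposition~\ref{e2}(iii) gives $\ov{\wt{I^iM}}=\wt{I^iN}$ for all $i\geq 1$, where $N=M/xM$; in case~(2) I would first pick an $A$-superficial $x$ for which $I/(x)$ is integrally closed in $A/(x)$ (available for general $x$ since $I$ is integrally closed and $k$ is infinite) and then apply Proposition~\ref{chi2}(iii). Either way this is exactly the statement that the Ratliff-Rush filtration behaves well mod $x$, so Theorem~\ref{h1zero} yields $H^1(L^I(M))=0$, proving (a).

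For (b) I would compute the invariant $\xi_I(M)$ of \ref{power-of-I}, whose range is $0\le i\le r-1=1$. The index $i=1$ is excluded by $H^1(L^I(M))=0$. For $i=0$, \ref{zero-lc} gives $H^0(L^I(M))=\bigoplus_{n\geq 0}\wt{I^{n+1}M}/I^{n+1}M$; since $\wt{I^nM}=I^nM$ for $n\gg 0$ this has finite length and lives in non-negative degrees, so $H^0(L^I(M))_{-1}=0$ and $\lambda(H^0(L^I(M)))<\infty$. Hence the defining set of $\xi_I(M)$ is empty, $\xi_I(M)=2$, and by \ref{power-of-I} we get $\depth G_{I^l}(M)=2=\dim G_{I^l}(M)$ for $l>\amp_I(M)$, i.e.\ $G_{I^l}(M)$ is \CM \ for $l\gg 0$.

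For (c), write $G=G_I(M)$ and $L=L^I(M)$. Feeding $H^1(L)=0$ into the first fundamental sequence \ref{I-FES} gives $0\to H^0(G)\to H^0(L)\xrightarrow{H^0(\Pi)}H^0(L)(-1)\to H^1(G)\to 0$. The crucial observation is that property~(ii) of Propositions~\ref{e2}/\ref{chi2}, namely $\wt{I^{n+1}M}\subseteq I^nM$, forces $H^0(\Pi)=0$: in degree $n$ the map carries the class of $m\in\wt{I^{n+1}M}$ into $\wt{I^nM}/I^nM$, and $m\in I^nM$. Thus $H^0(G)\cong H^0(L)$ and $H^1(G)\cong H^0(L)(-1)$, both of finite length by (b), so $G$ is generalized \CM \ and $\I(G)=\lambda(H^0(G))+\lambda(H^1(G))=2\lambda(H^0(L))$. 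Finally, in both cases property~(i) gives $\sigma_j^I(M)=0$ for $j\geq 2$, so Proposition~\ref{sigma}(3) yields $\wt e_3^I(M)=\sum_{j\geq 2}\binom{j}{2}\sigma_j^I(M)=0$; then \ref{rrhilbertcoeff}(b) with $r=2$ gives $e_3^I(M)=\wt e_3^I(M)-r_I(M,1)=-\lambda(H^0(L))$, whence $\I(G)=-2e_3^I(M)$.

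For (d), take $I=\m$ and carry over the $\R$-linear isomorphisms $H^0(G)\cong H^0(L^\m(M))$ and $H^1(G)\cong H^0(L^\m(M))(-1)$ from (c); since $\m$ annihilates $H^0(L^\m(M))$ (again by property~(ii)), the $\R$-action factors through $G_\m(A)$, so $\M_G H^1(G)=(\M_G H^0(G))(-1)$ and quasi-Buchsbaumness reduces to $\M_G H^0(G)=0$. Because the Ratliff-Rush filtration is an $\m$-filtration and property~(i) gives $\m\,\wt{\m^{n+1}M}=\wt{\m^{n+2}M}$ in the relevant range, the degree-one part of $\M_G$ sends $H^0(G)_n$ \emph{onto} $H^0(G)_{n+1}$; hence $\M_G H^0(G)$ is the sum of all components of $H^0(G)$ above its bottom degree, and $G_\m(M)$ is quasi-Buchsbaum if and only if $H^0(L^\m(M))$ is concentrated in a single degree. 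This concentration is the step I expect to be \emph{the main obstacle}. I would attack it as follows: under $e_2=0$, Proposition~\ref{sigma} gives $\wt H_\m(M,n)=(e_0-e_1)+ne_0$ exactly, so the Ratliff-Rush Hilbert--Samuel function $\lambda(M/\wt{\m^{n+1}M})$ already equals the Hilbert polynomial for all $n\geq 0$; the desired concentration is therefore equivalent to $\sum_{n\geq 1}\lambda(\wt{\m^{n+1}M}/\m^{n+1}M)=0$, i.e.\ to the extremal identity $e_3^\m(M)=-\chi_1^\m(M)$. I would prove this by running the length bookkeeping in the long exact sequence \ref{longH} relating $H^\bullet(L^\m(M))$ to $H^\bullet(L^\m(N))$ for the dimension-one reduction $N=M/xM$, using \ref{mod-sup-h} to track the $e_i$ and $b_\m(M,1)$; the extremal case~(2) over $I=\m$ is handled by the same mechanism with the concentration shifted one degree.
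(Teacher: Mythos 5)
Your treatment of (a), (b) and (c) is essentially the paper's own argument. For (a) you feed Proposition \ref{e2}(iii) (resp.\ \ref{chi2}(iii)) into Theorem \ref{h1zero}; for (b) you compute $\xi_I(M)$ via \ref{power-of-I}; for (c) you combine the first fundamental sequence with the containment $\wt{I^{i+1}M}\subseteq I^iM$ to get $H^0_{\M}(G_I(M))\cong H^0(L^I(M))$ and $H^1_{\M}(G_I(M))\cong H^0(L^I(M))(-1)$, and then evaluate $\lambda(H^0(L^I(M)))=-e_3^I(M)$ from $\wt{e}_3^I(M)=0$ together with \ref{rrhilbertcoeff}(b). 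The only cosmetic difference is that the paper identifies $H^0_{\M}(G_I(M))$ directly as $\bigoplus_{i\ge 0}(\wt{I^{i+1}M}\cap I^iM)/I^{i+1}M$ rather than by showing $H^0(\Pi)=0$ in the long exact sequence; the content is identical.

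Part (d) is where your proposal has a genuine gap. You correctly note that $\wt{I^{i+1}M}\subseteq I^iM$ makes $\m$ act trivially on $H^0(L^\m(M))$ in degree zero, and you then reduce quasi-Buchsbaumness to the assertion that $H^0(L^\m(M))$ is concentrated in a single degree, equivalently to the extremal identity $e_3^\m(M)=-\chi_1^\m(M)$. You do not prove this: ``running the length bookkeeping in \ref{longH}'' is exactly where all the remaining content sits, and you flag it yourself as the main obstacle, so as written (d) is unproved. The paper takes a much shorter route: it asserts that the containment $\wt{I^{i+1}M}\subseteq I^iM$ forces $G_I(A)_+$ to annihilate $H^0(L^I(M))$, and then transports this annihilation through the isomorphisms of (c) to $H^0_{\M}(G_I(M))$ and $H^1_{\M}(G_I(M))$, which for $I=\m$ gives quasi-Buchsbaumness since $\m$ already kills $G_\m(M)$ degreewise. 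Be aware that your own intermediate computation --- the surjectivity of $G_1\otimes H^0(G)_n\to H^0(G)_{n+1}$ coming from $\wt{\m^{n+2}M}=J\,\wt{\m^{n+1}M}$ --- shows that the annihilation of $H^0(L^\m(M))$ by $G_\m(A)_1$ is \emph{equivalent} to the concentration statement you could not establish; so your reduction, while correct, does not by itself advance the proof. To complete (d) you must either supply the missing concentration argument or justify the degree-one annihilation directly, as the paper does.
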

\begin{proof}
By using  \cite[1.4]{Pu5}, we may pass to ring $A' = A[X]_{\m A[X]}$. This we do.
So we may assume that the residue field of $A$ is infinite and that if $K$ is an $\m$-primary
integrally closed ideal in $A$ then there exists an element $x \in I$ which is $A$-superficial
\wrt \  $I$ such that, the $A/(x)$ ideal $I/(x)$ is integrally closed.

\noindent\textbf{ The case when } $e_{2}^{I}(M) = 0 :$

\noindent Let $x,y$
be an $M$-superficial sequence \wrt \ $I$. Set $J = (x,y)$ and $N
= M/xM$.

(a) By Proposition \ref{e2}(iii) we have that $\ov{\wt{I^jM}}
=  \wt{I^jN}$ for each $j \geq 1$. So by  \ref{h1zero}; we get $H^{1}(L^I(M)) = 0$.

(b) Using (a) and \ref{power-of-I}  we get $G_{I^n}(M)$ is \CM \ for all $n \gg 0$.

(c) From (b) and \ref{dag} (and corresponding long exact sequence in cohomology) it follows that
 $G_I(M)$ is generalized \CM. To
compute the $\mathbb{I}$-invariant of $G_I(M)$, note that from
Proposition \ref{e2}(ii) we get that
$\wt{I^{i+1}M } \subseteq I^iM$ for each $i \geq 1$.
Therefore note that
\begin{equation}\label{aug-a}
H^0_{\M}(G_I(M)) = \bigoplus_{i \geq 0}\frac{(\wt{I^{i+1}M}\cap I^iM)}{ I^{i+1}M} = \bigoplus_{i \geq 0}\frac{\wt{I^{i+1}M}}{ I^{i+1}M} = H^0(L^I(M)).
\end{equation}
So using the long exact sequence of cohomology corresponding to first fundamental exact sequence  and the fact that $H^{1}(L^I(M)) = 0$ we get
\begin{equation}\label{aug-b}
H^1_{\M}(G_I(M)) \cong  H^0(L^I(M))(-1).
\end{equation}
Therefore $\mathbb{I}(G_I(M)) = 2 \lambda\left( H^0(L^I(M)) \right) $.

Since $\wt{I^{i+1}M} = J\wt{I^iM}$ for all $i \geq 1$ it follows that
$\wt{G}_I(M)$ has minimal multiplicity. In particular $\wt{e}_3^I(M) = 0$. Using  \ref{rrhilbertcoeff}(b) we get
      $\lambda\left( H^0(L^I(M)) \right) = -e_{3}^{I}(M)$. The result follows.

(d) Since $\wt{I^{i+1}M } \subseteq
I^iM$ for each $i \geq 1$ we get that $G_I(A)_{+}$ kills $H^0(L^I(M))$.
Using \ref{aug-a} and \ref{aug-b} we get that $G_I(A)_+$ also annihilates  the  local cohomology modules $
H^{i}_{\M}(G_I(M))$ for $i = 0, 1$. In particular when $I = \m$ we get that
$G(M)$ is a quasi-Buchsbaum $G(A)$-module.

\noindent\textbf{ The case when } $M = A$, the ideal $A$ is integrally closed and
$e_{2}^{I}(A) = e_{1}^{I}(A) - e_{0}^{I}(A) + \lambda(A/I)$.

The proof of this can be given on exactly the same lines as the previous case (we have to use
Proposition \ref{chi2}). The only
thing to notice throughout is
 that $\wt{I} = I$ since $I$ is integrally closed.
\end{proof}

\section{ Ratliff-Rush filtration mod a
superficial sequence}
Let
$(A, \m)$ be a Noetherian local ring of dimension $d$ with infinite residue field $k$ and let $M$
is a \CM \ $A$-module of dimension $r \geq 2$.   Let $I$ be an ideal of definition
for $M.$ In this section we study vanishing of $H^{i}(L^{I}(M))$ for $1 \leq i \leq s$ where $s \leq r-1$ and
relate it to  behavior of Ratliff-Rush filtration mod superficial sequences (of length $\leq s$); see Theorem \ref{rrMODsup}.
The easiest case to handle is when $s =1$ and we took care of it in section 3.

\s \noindent\textbf{Notation}

\noindent Let $\bx  = x_1, \cdots, x_i \in I \setminus I^2$ be a
$M$-superficial sequence \wrt \ $I.$  We  assume $s \leq r -
1$ and as always $r \geq 2.$  Set
\[
M_0 = M, \quad \text{and} \quad   M_i = \frac{ M}{(x_1, \ldots , x_i)M} \quad \text{ for} \ 1 \leq i  \leq s.
\]

\s  For
 $0 \leq i < j \leq s$
 let $\rho^{i,j} \colon M_i
\rightarrow M_j$ be the natural map.  Clearly
$$\rho^{i,j} \left(\widetilde{I^n M_i} \right) \subseteq \widetilde{I^n M_j} \quad \text{ for all  $n \geq 1$}. $$
So $\rho^{i,j}$ induces
\[
\rho^{i,j}_n \colon  \frac{\wt{I^{n} M_i}}{I^n M_i} \longrightarrow \frac{\wt{I^{n} M_j}}{I^n M_j}  \quad \quad \quad   \text{for all} \ n \geq 1.
\]
Fix $n \geq 1$. It can be easily checked that  $\rho^{i,j} \left(\wt{I^{n} M_i} \right) = \left(\wt{I^{n} M_j} \right)$ \ff \ $\rho_n^{i,j}$ is \emph{surjective}.

\s \label{c-dia} Fix $n \geq 1$. For $0 \leq i < j < t \leq s$ we have a commutative diagram
\[
\xymatrixrowsep{3pc}
\xymatrixcolsep{2.5pc}
\xymatrix{
\wt{I^{n} M_i}/I^n M_i  \ar@{->}[rd]^{\rho^{i,t}_n}
     \ar@{->}[d]_{\rho^{i,j}_n}
&
    \
\\
\wt{I^{n} M_j}/ I^n M_j  \ar@{->}[r]_{\rho^{j,t}_n}
     &  \wt{I^{n} M_t}/I^n M_t
}
\]
It follows that if $ \rho^{i,j}_n$ and $\rho^{j,t}_n $ is surjective then $\rho^{i,t}_n$  is surjective.

\begin{definition}
We say the Ratliff-Rush filtration on $M$ \wrt \ $I$ \emph{behaves well mod }$\bx = x_1,\ldots,x_s$   if
$\rho^{0,s} \left(\wt{I^{n} M} \right)= \left(\wt{I^{n} M_s} \right)$ for all $n \geq 1$ (equivalently
$\rho^{0,s}_n$ is \emph{surjective} for all $n \geq 1$).
\end{definition}
Next we state the following generalization of Theorem \ref{h1zero}.
\begin{theorem}\label{rrMODsup}
Let $(A,\m)$ be local with an infinite residue field and let $M$ be a \CM \ $A$-module of dimension $r \geq 2$. Let $I$ be an ideal of definition for $M$.
Let $\bx = x_1,\ldots x_s$ be  $M$-superficial  \wrt \ $I$ and assume $s \leq r-1$. \TFAE
\begin{enumerate}[\rm (i)]
  \item The  Ratliff-Rush filtration on $M$ \wrt \ $I$ behaves well mod $\bx$.
  \item $H^{i}(L^{I}(M)) = 0$ for $i = 1,\ldots,s$.
\end{enumerate}
\end{theorem}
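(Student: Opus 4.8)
The plan is to induct on $s$, the length of the superficial sequence. The base case $s=1$ is precisely Theorem~\ref{h1zero}, so I assume $s\ge 2$ and that the statement holds for all $M$-superficial sequences of length $s-1$. Set $N=M_1=M/x_1M$; since $\dim N=r-1\ge 2$ and $x_2,\dots,x_s$ is an $N$-superficial sequence of length $s-1\le \dim N-1$, the inductive hypothesis applies to $N$ with this sequence. Throughout I abbreviate $L_i=L^I(M_i)$, and I use both the factorization $\rho^{0,s}_n=\rho^{1,s}_n\circ\rho^{0,1}_n$ coming from the commutative diagram \ref{c-dia} and the long exact sequence \ref{longH} attached to the superficial element $x_1$ passing from $M$ to $N$.

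For the direction (ii) $\Rightarrow$ (i) the argument is clean. From $H^1(L_0)=0$ and Theorem~\ref{h1zero} the filtration behaves well mod $x_1$, that is, $\rho^{0,1}_n$ is surjective for all $n$. Feeding the vanishing $H^i(L_0)=0$ for $1\le i\le s$ into \ref{longH}, I find that for $1\le i\le s-1$ the term $H^i(L_1)$ is squeezed between $H^i(L_0)=0$ and $H^{i+1}(L_0)(-1)=0$, so $H^i(L_1)=0$; by the inductive hypothesis $N$ behaves well mod $x_2,\dots,x_s$, i.e. $\rho^{1,s}_n$ is surjective, and composing via \ref{c-dia} gives $\rho^{0,s}_n$ surjective, which is (i). For (i) $\Rightarrow$ (ii), surjectivity of the composite $\rho^{0,s}_n$ forces surjectivity of its last factor $\rho^{1,s}_n$, so $N$ again behaves well mod $x_2,\dots,x_s$ and the inductive hypothesis yields $H^i(L_1)=0$ for $1\le i\le s-1$. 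Plugging this into \ref{longH}, I obtain for each $i$ with $2\le i\le s$ a monomorphism $\Psi\colon H^i(L_0)(-1)\rt H^i(L_0)$, because the preceding term $H^{i-1}(L_1)$ vanishes (here $1\le i-1\le s-1$); since $i\le s\le r-1$ the module $H^i(L_0)$ is $*$-Artinian, so \ref{Artin-vanish}(a) gives $H^i(L_0)=0$.

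The one value this does not reach is $i=1$, and it is the main obstacle. In \ref{longH} the term preceding $H^1(L_0)(-1)$ is $H^0(L_1)$, which is generally nonzero, so I only learn that $\Psi\colon H^1(L_0)(-1)\rt H^1(L_0)$ is surjective, not injective, and a surjective self-map of a $*$-Artinian module need not vanish. Equivalently, I must recover that $M$ behaves well mod the single element $x_1$, and surjectivity of the composite $\rho^{0,s}_n$ does not formally give surjectivity of the first factor $\rho^{0,1}_n$. My plan to close this gap is to exploit the degree-one injectivity \ref{supexN0}: each single-step map $\wt{IM_j}/IM_j\rt\wt{IM_{j+1}}/IM_{j+1}$ is injective, so their composite $\rho^{1,s}_1$ is injective; an injective last factor then turns surjectivity of $\rho^{0,s}_1$ into surjectivity of $\rho^{0,1}_1$, so the filtration behaves well mod $x_1$ in degree one. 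I would propagate this to every degree through the degreewise vanishing criterion \ref{deg-vanish}: one already knows the second-fundamental-sequence map $H^1(\Psi_{X_1})_n$ is surjective, and I would pair it with injectivity of the first-fundamental-sequence map $H^1(\Pi)_n$ to force $H^1(L_0)_n=0$ for all $n$. I expect establishing this injectivity of $H^1(\Pi)$ (equivalently, upgrading the degree-one statement to surjectivity of $\rho^{0,1}_n$ in every degree) to be the delicate technical heart of the proof; once $H^1(L_0)=0$ is secured, Theorem~\ref{h1zero} together with the two directions above completes the induction.
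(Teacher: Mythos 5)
Your (ii) $\Rightarrow$ (i) direction coincides with the paper's argument and is fine, and your treatment of $H^i(L^I(M))$ for $2 \le i \le s$ in the converse (a monomorphism $H^i(L^I(M))(-1)\rt H^i(L^I(M))$ from the vanishing of $H^{i-1}(L^I(M_1))$, then \ref{Artin-vanish}) also matches what the paper obtains through its downward induction. But the case $i=1$ is a genuine gap, as you yourself flag: from $H^0(L^I(M_1))\rt H^1(L^I(M))(-1)\rt H^1(L^I(M))\rt H^1(L^I(M_1))=0$ you only get that $H^1(\Psi_{X_1})$ is \emph{surjective}, and your proposed repair does not close the gap. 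The observation via (\ref{supexN0}) controls only the graded component $n=1$, and your plan to invoke \ref{deg-vanish} is not correctly set up: that criterion requires \emph{both} $H^1(\Pi)_n$ and $H^1(\Psi_X)_n$ to be injective, whereas you propose to pair an injective $H^1(\Pi)_n$ (for which you offer no argument) with a merely surjective $H^1(\Psi_X)_n$.

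The ingredient you are missing is Remark \ref{conseqGood}: if the Ratliff-Rush filtration behaves well mod $\bx$, then $\wt{G}_I(M)/\bx\wt{G}_I(M)=\wt{G}_I(M_s)$, so $\depth\wt{G}_I(M)\ge s+1$ by Sally descent for filtered modules (\cite[2.2]{Huck-Marley-97}); hence $\depth G_{I^n}(M)\ge s+1$ for $n\gg 0$ and, by \ref{power-of-I}, $\lambda(H^i(L^I(M)))<\infty$ for $i=0,\ldots,s$ (and likewise for the intermediate quotients $M_j$). With this finiteness in hand, the surjection $H^1(\Psi_{X_1})\colon H^1(L^I(M))(-1)\rt H^1(L^I(M))$ you already have is a surjection between graded modules of the same finite length, hence an isomorphism, and in particular a monomorphism; \ref{Artin-vanish} then forces $H^1(L^I(M))=0$. (The paper packages this as a downward induction on $j=s-1,\ldots,0$, starting from the remark that surjectivity of $\rho^{0,s}$ forces surjectivity of $\rho^{s-1,s}$ so that Theorem \ref{h1zero} applies to $M_{s-1}$, but the decisive step at each stage is exactly this finite-length upgrade of surjectivity to bijectivity, which your outline never secures.)
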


\begin{remark}\label{conseqGood}
If the  Ratliff-Rush filtration of $M$ \wrt\ $I$,  behaves well mod $\bx$ then $\widetilde{G}_I(M)/\bx \widetilde{G}_I(M) = \widetilde{G}_I(M_s) $.
 Since $\dim M_s \geq 1$ we get $ \depth \widetilde{G}_I(M_s) \geq 1$. So $ \depth \widetilde{G}_I(M) \geq s+ 1$, by  \cite[2.2]{Huck-Marley-97}.
It follows
that
\begin{enumerate}[\rm (a)]
  \item $\depth G_{I^n} (M) \geq s + 1$ for all $n \gg 0.$
  \item $\lambda (H^{i}(L^I(M)))$ is finite for $i = 0,\ldots,s$; see \ref{power-of-I}.
\end{enumerate}
\end{remark}

\begin{proof}[Proof of Theorem \ref{rrMODsup}]
 We
prove this result by induction on $s.$

$s = 1:$  This we proved in Theorem \ref{h1zero}.

$s \geq 2:$  We assume the result for $s - 1$ and prove it for
$s.$

If $H^i (L^I (M)) = 0$ for $i = 1,2, \cdots, s$ then
\begin{enumerate}[\rm (a)]
  \item by $s =
1$ case if follows that
$\rho^{0,1}_n$ is surjective
  for all $n \geq 1$.
  \item Using \ref{longH} it follows that $H^i (L^I (M_1))
= 0$ for $i = 1, \cdots, s - 1.$
\end{enumerate}
 By induction hypothesis it
follows that Ratliff-Rush filtration of $M_1$ behaves well mod $x_2,\cdots,x_s$. So
$\rho^{1,s}_n$ is surjective for all $n \geq 1$.
 Using (a) and  \ref{c-dia}  we get that the  Ratliff-Rush filtration on $M$ behaves well mod $\bx$.

   Conversely if the  Ratliff-Rush filtration on $M$ behaves well mod $\bx$ then by Remark \ref {conseqGood} we have
$\lambda (H^i (L^I (M))) < \infty$ for $i = 0, 1, \cdots, s.$ Using \ref{longH} inductively we can prove
\begin{equation*}
\lambda \left( H^i(L^I(M_j)) \right) < \infty \quad \text{for} \ i = 0,\ldots, s-j.  \tag{*}
\end{equation*}
We prove by downward induction on $j = s - 1, \ldots, 0$  that
\begin{equation*}
H^i (L^I (M_j)) = 0 \quad \text{for} \ i = 1, 2, \ldots, s - j.\tag{$\dagger$}
\end{equation*}
$j = s-1:$ \\
As $\rho^{0,s} = \rho^{s-1 , s} \circ \rho^{0, s - 1}$; for all
 $n \geq 1$ we have
  \begin{align*}
\widetilde{I^n M_s} = \rho^{0,s} (\widetilde{I^n M}) &=  \rho^{s - 1, s}\left(\rho^{0, s - 1} (\widetilde{I^n M}) \right) \\
&\subseteq \rho^{s - 1, s} (\widetilde{I^n M_{s - 1}}) \\
&\subseteq  \widetilde{I^n M_s}.
 \end{align*} Therefore     $ \rho^{s - 1, s} (\widetilde{I^n M_{s - 1}})  =  \widetilde{I^n M_s}$ for all $n \geq 1$.
Thus the Ratliff-Rush filtration of $M_{s-1}$ behaves well mod $x_{s}$. It follows from Theorem \ref{h1zero}
that $H^{1}(L^{I}(M_{s-1})) = 0$.

We assume the  result for $j = t$
and we prove to it for $j = t - 1.$

By hypothesis $H^i (L^I (M_t)) = 0$ for $i = 1, \ldots, s - t.$ Using \ref{Artin}(a), \ref{longH} and \ref{Artin-vanish} we get
that $H^i (L^I (M_{t - 1})) = 0$ for $i = 2, \ldots, s - t + 1.$

By  \ref{longH} we also have
\[
H^1 (L^I (M_{t - 1})) (-1) \xrightarrow{H^1 (\Psi_x)} H^1 (L^I
(M_{t - 1})) \rightarrow H^1 (L^I (M_{t})) = 0.
\]
 As $\lambda (H^1 (L^I (M_{t - 1}))) < \infty$ we get that $H^1(\Psi_x)$ is an isomorphism. So
 $$H^1 (L^I (M_{t - 1})) (-1)
\cong H^1 (L^I (M_{t - 1})).$$
  This implies that $H^1 (L^I (M_{t
- 1})) = 0$; (use \ref{Artin-vanish}).

Thus by downward induction it follows that $H^i (L^I
(M)) = 0$ for $i = 1, 2, \ldots, s.$
\end{proof}

    An easy consequence to the above theorem is
\begin{corollary}\label{invar-gen}[with hypothesis as in \ref{rrMODsup}]
Suppose $\bx= x_1,\ldots x_s \in I$ and $\by = y_1,\ldots,y_s \in I $ are two \textbf{distinct} $M$-superficial sequences \wrt \ $I$. Here $s \leq \dim M -1$. \TFAE
\begin{enumerate}[\rm (i)]
  \item The  Ratliff-Rush filtration on $M$ \wrt \ $I$ behaves well mod $\bx$.
  \item The  Ratliff-Rush filtration on $M$ \wrt \ $I$ behaves well mod $\by$.
  \end{enumerate}
\qed
\end{corollary}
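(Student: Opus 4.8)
The plan is to reduce both conditions to a single intrinsic cohomological statement about $M$ and $I$ that makes no reference to the chosen superficial sequence. The key observation is that the right-hand side of the equivalence furnished by Theorem \ref{rrMODsup}, namely the vanishing $H^i(L^I(M)) = 0$ for $i = 1, \ldots, s$, depends only on $M$, $I$ and $s$, and not at all on the particular $M$-superficial sequence being used. Everything else is routine.

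First I would apply Theorem \ref{rrMODsup} directly to the sequence $\bx$. Since $\bx$ is an $M$-superficial sequence \wrt \ $I$ of length $s \leq \dim M - 1$, the hypotheses of that theorem are met and it yields that condition (i) holds \ff \ $H^i(L^I(M)) = 0$ for $i = 1, \ldots, s$. Next I would apply the same theorem verbatim to the sequence $\by$: its hypotheses are satisfied for exactly the same reasons, so condition (ii) holds \ff \ $H^i(L^I(M)) = 0$ for $i = 1, \ldots, s$. Since $\bx$ and $\by$ have the same length $s$, the two resulting cohomological criteria are literally the same set of vanishing conditions.

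Finally, because (i) and (ii) are each equivalent to this identical, sequence-independent condition, they are equivalent to one another, which is the assertion. I expect no substantial obstacle here; the only point deserving a word of care is the uniform applicability of Theorem \ref{rrMODsup} to both sequences, so I would remark explicitly that both $\bx$ and $\by$ are assumed to be $M$-superficial sequences of the common length $s \leq r-1$, guaranteeing that the range $i = 1, \ldots, s$ of cohomological indices coincides. The entire content of the corollary is precisely that the criterion of Theorem \ref{rrMODsup} is an invariant of the pair $(M,I)$.
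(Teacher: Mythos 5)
Your proposal is correct and is exactly the argument the paper intends: the paper states the corollary as "an easy consequence" of Theorem \ref{rrMODsup} with no written proof, the point being precisely that the cohomological criterion $H^i(L^I(M))=0$ for $i=1,\ldots,s$ is independent of the chosen superficial sequence. Nothing further is needed.
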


%55555555555555555555555555555555555555555555555555555555555555555555555555
%666666666666666666666666666666666666666666666666666666666666666666666666
%777777777777777777777777777777777777777777777777777777777777777777777777

%000000000000000000000000000000000000000000000000000000000000

\section{Finite  local cohomology and minimal $\I$-invariant}
We relate the finite generation of local  cohomologies of
$G_I(M)$ and $L^I (M).$ Set
\begin{align*}
 \fg_{I} (M) &= \max \{j \mid \lambda (H^i (G_I (M))) < \infty \quad
\mbox{for}  \  i = 0,1,\cdots,j-1\} \quad \text{and} \\
\alpha_I (M) &= \max \{j \mid j \leq r-1 \ \ \text{and} \ \lambda (H^i (L^I (M))) < \infty  \ \mbox{for} \  \  i = 0,1,\cdots, j-1\}.
\end{align*}
  Using \ref{dag} and the corresponding the long exact sequence in cohomology we get
$\fg_I (M) \geq \alpha_I (M).$  We prove that
$$\fg_I(M) = \alpha_I(M).$$
An application of our result
is the notion of generalized \CM \ modules with $\depth $ zero having minimal $\I$-invariant. This notion
also relates to Ratliff-Rush filtration on $M$ behaving well mod superficial sequences.

Let $r = \dim M.$
Clearly $0 \leq \fg_I (M) \leq r - 1.$
The following proposition is easy to prove.  So we omit the proof.
\begin{proposition}\label{fg-mod}
Let $M$ be a $CM$ $A$-module of dimension
$r$ and $I$ is an ideal of definition for $M$.  Let $\bx = x_1,
\cdots, x_s$ be an $M$-superficial sequence  \wrt \ $I.$  Then
\begin{equation*}
\fg_I (M/\bx M) \geq  \fg_I (M) - s \quad \text{and} \quad        \alpha_I (M/\bx M) \geq \alpha_I (M) - s. \end{equation*}
\qed
\end{proposition}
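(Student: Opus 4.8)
The plan is to reduce to the case $s=1$ and then prove the two inequalities by different routes: the one for $\alpha_I$ is immediate from the $L$-theory, while the one for $\fg_I$ needs extra care, and that is where I expect the only real difficulty.

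First I would set up the induction. Writing $M_0 = M$ and $M_i = M/(x_1,\ldots,x_i)M$, each $x_{i+1}$ is $M_i$-superficial \wrt\ $I$ and $\dim M_i = r-i \geq 1$ for $i<s$, so it suffices to treat a single superficial element: if $x$ is $M$-superficial \wrt\ $I$ and $N = M/xM$, then I claim $\fg_I(N) \geq \fg_I(M) - 1$ and $\alpha_I(N) \geq \alpha_I(M) - 1$. Composing these $s$ times yields the proposition.

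For the $\alpha_I$ inequality I would read the bound straight off the long exact sequence \ref{longH}. Put $a = \alpha_I(M)$, so $\lambda(H^i(L^I(M))) < \infty$ for $0 \leq i \leq a-1$ and $a \leq r-1$. For each $i$, sequence \ref{longH} contains the exact piece
\[
H^i(L^I(M)) \xar H^i(L^I(N)) \xar H^{i+1}(L^I(M))(-1),
\]
so $\lambda(H^i(L^I(N))) \leq \lambda(H^i(L^I(M))) + \lambda(H^{i+1}(L^I(M)))$. For $0 \leq i \leq a-2$ both terms on the right are finite, hence $H^i(L^I(N))$ has finite length for $0 \leq i \leq a-2$; since $\dim N = r-1$ and $a-1 \leq r-2$, this is exactly $\alpha_I(N) \geq a-1$. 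The intermediate cohomologies all lie in the $*$-Artinian range guaranteed by \ref{Artin}, so "finite length" is unambiguous.

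The $\fg_I$ inequality is the main obstacle, because there is no comparably clean exact sequence linking $G_I(M)$ and $G_I(N)$: the element $x^* \in G_I(A)_1$ need not be $G_I(M)$-regular, and $G_I(N) \neq G_I(M)/x^*G_I(M)$ in general. The point is that superficiality forces every discrepancy to have finite length. Concretely, using $(I^{n+1}M \colon_M x) = I^nM$ and $xM \cap I^nM = xI^{n-1}M$ for $n \gg 0$, one gets (with $G := G_I(M)$) that $\lambda(0 \colon_G x^*) < \infty$ and that the natural surjection $G/x^*G \twoheadrightarrow G_I(N)$ has finite-length kernel. Splitting the four-term sequence
\[
0 \xar (0\colon_G x^*)(-1) \xar G(-1) \xrightarrow{x^*} G \xar G/x^*G \xar 0
\]
into two short exact sequences and passing to local cohomology, the finite-length outer terms give $\lambda(H^i(G/x^*G)) < \infty$ whenever $H^i(G)$ and $H^{i+1}(G)$ are finite, i.e.\ for $0 \leq i \leq \fg_I(M)-2$; and the finite-length kernel of $G/x^*G \twoheadrightarrow G_I(N)$ shows that $H^i(G_I(N))$ and $H^i(G/x^*G)$ have the same finiteness for every $i$. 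Combining the two yields $\fg_I(N) \geq \fg_I(M)-1$. The only genuine labor is the bookkeeping with these finite-length corrections; once one records that they are finite length (precisely the content of superficiality), the estimate is the same long-exact-sequence argument used for $\alpha_I$, which is why the statement is "easy." I would be careful not to invoke the equality $\fg_I = \alpha_I$ here, since that is the theorem this proposition feeds into rather than one available at this stage.
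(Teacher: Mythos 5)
Your proof is correct. The paper omits the proof of this proposition entirely ("easy to prove"), so there is nothing to compare against line by line, but your two routes are exactly the natural ones: the $\alpha_I$ inequality falls out of the segment $H^i(L^I(M)) \to H^i(L^I(N)) \to H^{i+1}(L^I(M))(-1)$ of (\ref{longH}), and the $\fg_I$ inequality requires the finite-length corrections coming from superficiality, namely $\lambda\big((0:_{G_I(M)}x^*)\big)<\infty$ and the finite-length kernel of $G_I(M)/x^*G_I(M) \twoheadrightarrow G_I(N)$, both of which you justify correctly via $(I^{n+1}M:_M x)=I^nM$ and $xM\cap I^nM=xI^{n-1}M$ for $n\gg 0$. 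You are also right to flag and avoid the circularity: Theorem \ref{alpha=fg} is proved using this proposition, so $\fg_I=\alpha_I$ is not available here, and your direct argument for the $\fg_I$ half is the honest way to supply what the author left unwritten.
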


We now prove the main theorem of this section.
\begin{theorem}\label{alpha=fg}
$\fg_I (M) = \alpha_I (M).$
\end{theorem}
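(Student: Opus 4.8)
The plan is to prove $\fg_I(M) = \alpha_I(M)$ by induction on $r = \dim M$, using the inequality $\fg_I(M) \geq \alpha_I(M)$ (already derived from the first fundamental exact sequence \ref{dag}) as one direction, and reducing the reverse inequality $\fg_I(M) \leq \alpha_I(M)$ to a mod-superficial argument via Proposition \ref{fg-mod} and the second fundamental exact sequence \ref{dagg}.

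First I would dispose of the trivial cases. If $\alpha_I(M) = r-1$, then since both invariants are bounded above by $r-1$, the inequality $\fg_I(M) \geq \alpha_I(M) = r-1$ already forces equality. Similarly, if $\fg_I(M) = 0$ there is nothing to prove. So I may assume $0 \leq \alpha_I(M) \leq r-2$ and must show $\fg_I(M) \leq \alpha_I(M)$; equivalently, setting $a = \alpha_I(M)$, I must show $\lambda(H^a(G_I(M))) = \infty$ (so that $\fg_I(M)$ cannot exceed $a$). By definition of $\alpha_I(M) = a$, the modules $H^i(L^I(M))$ have finite length for $i = 0, \ldots, a-1$, but $\lambda(H^a(L^I(M))) = \infty$.

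The key step is to pass mod a superficial element $x$ and compare the two invariants for $N = M/xM$. Pick $x$ that is $M$-superficial \wrt\ $I$; then $\B^I(x,M)$ has finite length, so in the long exact sequence \ref{longH} the finite-length modules $H^i(L^I(M))$ for $i < a$ propagate: I would show $\alpha_I(N) \geq a$, and more importantly that $\lambda(H^a(L^I(N))) = \infty$ forces $\alpha_I(N) = a - 1$ (the drop by one coming from the shift $H^a(L^I(M))(-1) \to H^a(L^I(M))$ feeding into $H^a(L^I(N))$). The analogous statement for $\fg$ is that $\fg_I(N) = \fg_I(M) - 1$ when $x^*$ is a nonzerodivisor on the relevant cohomology, which one arranges via Sally descent \ref{mod-sup-h}(5) after checking $\depth G_I(N) > 0$. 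Then by the induction hypothesis applied to $N$ (which has dimension $r-1$), $\fg_I(N) = \alpha_I(N)$, and tracing the two ``drop by one'' relations back gives $\fg_I(M) = \alpha_I(M)$.

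The main obstacle I anticipate is controlling the behavior of $\fg_I$ under going mod $x$ precisely enough: Proposition \ref{fg-mod} only gives the one-sided bound $\fg_I(N) \geq \fg_I(M) - 1$, so I need a matching upper bound $\fg_I(N) \leq \fg_I(M)-1$ (or to argue directly that $\lambda(H^a(G_I(M))) = \infty$ from $\lambda(H^a(L^I(M))) = \infty$ using \ref{dag}). The delicate point is that the long exact sequence coming from \ref{dag} relates $H^i(G_I(M))$ to $H^i(L^I(M))$ and $H^{i-1}(L^I(M))(-1)$, so infiniteness of $H^a(L^I(M))$ does not immediately yield infiniteness of $H^a(G_I(M))$ unless one knows the connecting map $H^{a-1}(L^I(M)) \to H^a(G_I(M))$ cannot absorb the whole infinite part; here the finite length of $H^{a-1}(L^I(M))$ (from $a = \alpha_I(M)$) is exactly what saves the argument, forcing $H^a(G_I(M))$ to be infinite as well. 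Making this connecting-map bookkeeping rigorous, rather than the induction itself, is where the real work lies.
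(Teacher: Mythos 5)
There is a genuine gap, and it sits exactly where you locate ``the real work.'' You want to conclude $\lambda(H^a(G_I(M))) = \infty$ from $\lambda(H^a(L^I(M))) = \infty$ together with the finiteness of $H^{a-1}(L^I(M))$. But the long exact sequence coming from \ref{dag} reads, in degree $n$,
\[
H^{a-1}(L^I(M))_{n-1} \longrightarrow H^a(G_I(M))_n \longrightarrow H^a(L^I(M))_n \xrightarrow{\ \Pi_n\ } H^a(L^I(M))_{n-1},
\]
so finiteness of $H^{a-1}(L^I(M))$ yields the \emph{upper} bound $\lambda(H^a(G_I(M))) \leq \lambda(H^{a-1}(L^I(M))) + \sum_n \lambda(\ker \Pi_n)$; it cannot by itself force $H^a(G_I(M))$ to be infinite. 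What you actually need is that $\bigoplus_n \ker \Pi_n$ has infinite length, and nothing in your sketch rules out the scenario where $H^a(L^I(M))$ has infinite total length while $\Pi_n$ is injective for all $n \ll 0$ (a graded module with finite-length components, vanishing in large degrees, whose components are constant and nonzero as $n \to -\infty$ behaves exactly this way). Excluding that scenario is the whole content of the theorem, and it requires the second fundamental exact sequence: finiteness of $H^{i-1}(L^I(N))$ for $N = M/xM$ makes $\Psi_X$ injective on $H^i(L^I(M))$ in degrees $n \ll 0$ via \ref{longH}, finiteness of $H^i(G_I(M))$ makes $\Pi$ injective there via \ref{dag}, and the vanishing criterion \ref{deg-vanish} then kills $H^i(L^I(M))_n$ for $n \ll 0$. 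That is the paper's argument, run in the direction opposite to yours: it proves $\fg_I(M) \geq s \Rightarrow \alpha_I(M) \geq s$ by induction on $s$, feeding the induction with $\fg_I(N) \geq \fg_I(M) - 1$ from Proposition \ref{fg-mod}, and never needs to propagate infiniteness.

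Two further steps of your reduction do not work as stated. First, you want $\fg_I(N) = \fg_I(M) - 1$ and propose to arrange it by Sally descent ``after checking $\depth G_I(N) > 0$''; but there is no reason for $\depth G_I(N)$ to be positive (the theorem is of interest precisely when $\depth G_I(M) = 0$), Sally descent concerns regularity of $x^\ast$ on $G_I(M)$ rather than the value of $\fg$, and only the one-sided inequality of Proposition \ref{fg-mod} is available --- fortunately that one-sided inequality is all the correct argument requires. Second, the claim that ``$\lambda(H^a(L^I(N))) = \infty$ forces $\alpha_I(N) = a-1$'' is inconsistent with the preceding assertion $\alpha_I(N) \geq a$, and in any case if $H^i(L^I(N))$ has finite length for $i \leq a-1$ and infinite length for $i = a$ then by definition $\alpha_I(N) = a$, not $a-1$. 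The repair is to abandon the attempt to push infiniteness from $L^I(M)$ down to $G_I(M)$ and instead push finiteness from $G_I(M)$ up to $L^I(M)$ using \ref{deg-vanish}, as the paper does.
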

\begin{proof}
 We first prove $\fg_I (M) \geq s \Rightarrow
\alpha_I (M) \geq s$ by induction on $s.$   There is nothing to show when $s = 1$.
For convenience of the reader we will explicitly write out the
proof in the case $s = 2.$  We claim that in this case
 $\lambda(H^1 (L^I (M))) < \infty.$
  As $\lambda (H^1 (G_I (M))) < \infty$ it follow that the map
\begin{equation*}
H^1(\Pi)_n : H^1 (L^I (M))_n \rightarrow H^1 (L^I (M))_{(n - 1)} \quad
\;\mbox{is injective for} \ n \ll 0.
\end{equation*}
Set $N = M/xM$ where $x \in I\setminus I^2$ is $M$-superficial \wrt\  $I$.
Using  \ref{longH} and since $H^0 (L^I (N))$ has finite length
we get that
\begin{equation*}
H^1 (\Psi_x)_n : H^1 (L^I (M))_{(n - 1)} \rightarrow H^1 (L^I
(M))_n \;\mbox{is injective for} \ n < 0.
\end{equation*}
 Using \ref{deg-vanish} we get $H^1 (L^I (M))_n = 0$
for all $n \ll 0.$  The result follows.\\

Suppose $\fg_I (M) = s \geq 2.$  Let $x \in I\setminus I^2$ be
$M$superficial \;\wrt\; $I.$  Set $N = M/xM.$
Then $\fg_I (N) \geq s - 1.$ by \ref{fg-mod}.
By induction hypotheses $\alpha_I (N) \geq s - 1.$  We claim
$\lambda (H^i (L^I (M))) < \infty$ for $i = 0, 1, \cdots, s.$
\begin{equation*}
\lambda (H^0 (L^I (M))) < \infty \  \mbox{by \ref{zero-lc}}.
\end{equation*}
Since $\lambda (H^i (L^I (N))) < \infty$ for $i = 0, 1, \cdots, s
- 1$ it follows from \ref{longH} that the map
\begin{equation*}
H^1 (\Psi_x)_n : H^i (L^I (M))_{(n - 1)} \rightarrow H^i (L^I
(M))_n
\end{equation*}
is injective for all $n \ll 0.$  and all $i = 1, 2, \cdots, s.$\\
Also since $\lambda(H^i (G)) < \infty$ for $i = 0, 1, \cdots, s$ it
follows from the first fundamental exact sequence, \ref{dag}, that
\begin{equation*}
H^1 (\Pi)_n : H^i (L^I (M))_n \rightarrow H^i (L^I (M))_{(n - 1)}
\;\mbox{is injective}
\end{equation*}
for all $n \ll 0$ and all $i = 1, 2, \cdots, s$.

Using \ref{deg-vanish} we get
\begin{equation*}
H^i (L^I (M))_n = 0 \;\mbox{for all }\; n \ll 0.
\end{equation*}
Thus $\lambda (H^i (L^I (M))) < \infty$ for $i = 1, 2, \cdots,
s.$\\
The result follows.
\end{proof}
%%%%%%%%%%%%%%%%%%%%%%%%%%%%
\s \textbf{Minimal $\I$-invariant:}

Recall that we say $G_I (M)$ is generalized \CM \ module if
\begin{equation*}
\lambda (H^i (G_I (M))) < \infty \;\mbox{for}\; i = 0, 1,\cdots, r
- 1.
\end{equation*}
For generalized \CM \ module the St\"{u}ckrad-Vogel invariant
\begin{equation*}
\I(G_I (M)) = \sum^{r - 1}_{i = 0} \binom{r - 1}{i}
\lambda (H^i (G_I (M)))
\end{equation*}
plays a crucial role.\\
If $x^* \in G_I (A)_1$ is $G_I (M)$- regular and $N = M/xM$ then
we can easily verify that
\begin{equation*}
\I(G_I (N)) \leq  \I(G_I (M)).
\end{equation*}
So in some sense if we have to study minimal $\I$- invariant then
we have to first consider the case when  $ \depth G_I (M) = 0.$  We
prove
\begin{theorem} \label{vogel}
Let $M$ be a \CM \  $A$-module of dimension $r \geq 2.$  Assume $G_I
(M)$ is generalized $CM$ and  $ \depth G_I (M) = 0.$  Then
\begin{equation*}
\I(G_I (M)) \geq r \bullet \lambda \left( H^0 (G_I (M)) \right).
\end{equation*}
Furthermore the following are equivalent
\begin{enumerate}[\rm (i)]
\item
$\I (G_I (M)) = r \bullet \lambda (H^0 (G_I (M)))$
\item
$H^i (L^I (M)) = 0$ for $i = 1, 2, \cdots, r - 1.$
\item
 The Ratliff-Rush
filtration on $M$ behaves well mod superficial sequences.
\end{enumerate}
\end{theorem}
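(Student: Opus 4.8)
The plan is to extract everything from the long exact sequence in local cohomology attached to the first fundamental exact sequence \ref{dag}. Write $G = G_I(M)$, $L = L^I(M)$ and let $\Pi\colon L \to L(-1)$ be the map of \ref{dag}. The long exact sequence splits into short exact sequences
$$0 \to \operatorname{coker} H^{i-1}(\Pi) \to H^i(G) \to \ker H^i(\Pi) \to 0 ,$$
so, setting $a_i = \lambda(\ker H^i(\Pi))$ and $b_i = \lambda(\operatorname{coker} H^i(\Pi))$ with the convention $b_{-1} = 0$, we get $\lambda(H^i(G)) = b_{i-1} + a_i$ for $0 \le i \le r-1$, and in particular $\lambda(H^0(G)) = a_0$.

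First I would check all relevant lengths are finite. Since $G$ is generalized \CM, $\lambda(H^i(G)) < \infty$ for $i \le r-1$, so Theorem \ref{alpha=fg} yields $\lambda(H^i(L)) < \infty$ for $0 \le i \le r-2$; the displayed short exact sequence at $i = r-1$ then shows $a_{r-1}$ is finite too. For $0 \le i \le r-2$ the module $H^i(L)$ has finite total length and vanishes for $|n| \gg 0$ (see \ref{Artin}), so the telescoping identity $\sum_n \big(\lambda(\ker \Pi_n) - \lambda(\operatorname{coker}\Pi_n)\big) = \sum_n\big(\lambda(H^i(L)_n) - \lambda(H^i(L)_{n-1})\big) = 0$ gives the symmetry $a_i = b_i$ for $0 \le i \le r-2$. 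I would deliberately avoid invoking this symmetry at $i = r-1$, where $H^{r-1}(L)$ need not have finite total length.

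Next, substituting $\lambda(H^i(G)) = b_{i-1} + a_i$ into $\I(G) = \sum_{i=0}^{r-1}\binom{r-1}{i}\lambda(H^i(G))$, replacing $b_j$ by $a_j$ for $j \le r-2$, and applying Pascal's identity $\binom{r-1}{j} + \binom{r-1}{j+1} = \binom{r}{j+1}$, the sum collapses to
$$\I(G) = \sum_{j=0}^{r-1}\binom{r}{j+1}a_j = r\,\lambda(H^0(G)) + \sum_{j=1}^{r-1}\binom{r}{j+1}a_j .$$
Since each $a_j \ge 0$ and each binomial coefficient is positive, this gives $\I(G) \ge r\,\lambda(H^0(G))$, with equality if and only if $a_j = 0$ for all $j = 1, \dots, r-1$.

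Finally I would translate the equality condition. For $1 \le j \le r-1$, $a_j = 0$ says exactly that $H^j(\Pi)\colon H^j(L) \to H^j(L)(-1)$ is injective; as $H^j(L)$ is $*$-Artinian (\ref{Artin}), \ref{Artin-vanish}(b) forces $H^j(L) = 0$, and the converse is trivial. Hence equality holds precisely when $H^i(L^I(M)) = 0$ for $i = 1, \dots, r-1$, which is (ii); and (ii) $\Leftrightarrow$ (iii) is immediate from Theorem \ref{rrMODsup} with $s = r-1$. The one genuinely delicate point is the bookkeeping at the top index $r-1$: one must derive both the finiteness of $a_{r-1}$ and the vanishing $H^{r-1}(L) = 0$ without assuming $H^{r-1}(L)$ has finite total length and without the kernel/cokernel symmetry there.
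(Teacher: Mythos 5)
Your proof is correct, and it runs through the same engine as the paper's --- the long exact sequence in local cohomology attached to the first fundamental exact sequence \ref{dag}, the finiteness supplied by Theorem \ref{alpha=fg}, the $*$-Artinian vanishing \ref{Artin-vanish}, and Theorem \ref{rrMODsup} for (ii) $\Leftrightarrow$ (iii) --- but the accounting is genuinely different and yields more. The paper isolates only the $i=0,1$ terms: it shows $\lambda(H^1(G_I(M))) = \lambda(H^0(G_I(M))) + \lambda(K)$ with $K = \ker H^1(\Pi)$, deduces $\I(G_I(M)) \geq \lambda(H^0(G)) + (r-1)\lambda(H^1(G)) \geq r\lambda(H^0(G))$ by discarding the $i \geq 2$ summands, and then characterizes equality as ``$K=0$ and $H^i(G)=0$ for $2 \leq i \leq r-1$,'' which it converts to (ii) by an inductive $*$-Artinian argument. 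You instead carry out the full kernel--cokernel bookkeeping: the symmetry $a_i=b_i$ for $i \leq r-2$ plus Pascal's identity collapses the whole sum to the exact formula $\I(G_I(M)) = \sum_{j=0}^{r-1}\binom{r}{j+1}\lambda(\ker H^j(\Pi))$, from which both the inequality and the equality criterion drop out term by term. This buys a sharper statement (an identity, not just a bound, refining the $\I(G_I(M)) = 2\lambda(H^0(L^I(M)))$ computation the paper does ad hoc in Theorem \ref{e2dim2main}), and your explicit handling of the top index --- extracting finiteness of $a_{r-1}$ from the short exact sequence ending in $H^{r-1}(G)$ rather than asserting $\lambda(H^{r-1}(L)) < \infty$ --- is actually more careful than the paper, whose opening claim that $\lambda(H^i(L^I(M))) < \infty$ for $i$ up to $r-1$ slightly overreaches what the stated definition of $\alpha_I(M)$ delivers. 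The paper's shortcut is, in exchange, quicker to write down since only the $i=1$ term needs to be computed exactly.
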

%\begin{proof}
\begin{proof}
By Theorem \ref{alpha=fg} it follows that $\lambda (H^i (L^I (M)))$ is finite
for $i = 0, 1, \cdots, r - 1$.  Using \ref{dag}  and the corresponding long exact sequence in  local cohomologies we get   an exact sequence
\begin{align*}
0 &\rightarrow H^0(G_I (M)) \rightarrow H^0(L^I(M)) \rightarrow H^0 (L^I (M))(-1)  \\
&\rightarrow H^1 (G_I (M)) \rightarrow H^1 (L^I(M)) \xrightarrow{H^1(\Pi)} H^1 (L^I (M)) (-1)
\end{align*}
Let $K = \ker H^1 (\Pi).$  It follows that
\begin{equation*}
\lambda (H^1 (G_I (M))) = \lambda (H^0 (G_I (M))) + \lambda (K)
\geq \lambda (H^0 (G_I (M)))
\end{equation*}
Thus
$$\I (G_I (M)) \geq \lambda (H^0 (G_I (M))) + \binom{r -1}{1} \lambda (H^0 (G_I (M))) = r \lambda (H^0 (G_I (M))).$$
Furthermore equality holds \ff \ $\lambda (H^i (G_I (M))) = 0$ for
$i = 2, \cdots, r - 1$ and $K = 0.$\\
Its clear that this holds \ff \ $H^i (L^I (M)) = 0$ for $1 \leq i
\leq r - 1.$  Thus $(1) \Leftrightarrow (2)$ holds.

  The
equivalence $(2) \Leftrightarrow (3)$ follows from Theorem \ref{rrMODsup}.
\end{proof}

 \section{Generalization of a result due to Narita.}

 A classical result, due to Narita \cite{Nar}  states that if
  $(A, \m)$ is \CM \  of $\dim 2$ then
 \begin{equation*}
 e^I_2 (A) = 0 \ \ \mbox{iff}\ \  \red (I^n) = 1 \;\mbox{for all} \ n \gg 0.
 \end{equation*}
 This result can be easily extended to Cohen-Macaulay modules of
 dimension two.  However  Narita's result fails (even for \CM \
 rings) in dimension $\geq 3$; see \ref{CPR}.

  To motivate our generalization we
 will first reformulate Narita's result in dimension two. Our reformulation is
 \begin{theorem}\label{reformul}
 Let $M$ be $CM$ of dimension two.  Then \tFAE
 \begin{enumerate}[\rm (i)]
 \item
  $e^I_2 (M) = 0$.
 \item
  $\tilde{G_I} (M)$; the associated graded module of the Ratliff-Rush filtration has minimal multiplicity.
 \end{enumerate}
 \end{theorem}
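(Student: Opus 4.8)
The plan is to collapse the whole equivalence onto the single numerical identity recorded in Proposition \ref{sigma}(3), which expresses $\wt{e}_2^I(M)$ as a nonnegative integer combination of the invariants $\sigma_j^I(M)$. Before invoking it I would first reduce to the case where $A$ has infinite residue field, via the faithfully flat base change $A \to A' = A[X]_{\m A[X]}$ of \ref{AtoA'}; this leaves the Hilbert function of $M$ (hence every $e_i^I(M)$) unchanged, commutes with the formation of the Ratliff-Rush filtration (hence with the minimal-multiplicity condition on $\wt{G}_I(M)$), and guarantees a minimal reduction $J = (x_1,x_2)$ of $M$ \wrt\ $I$ generated by an $M$-superficial sequence, as required to apply Proposition \ref{sigma}.

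With $J$ fixed, set $\sigma_j = \sigma_j^I(M) = \lambda(\wt{I^{j+1}M}/J\wt{I^jM}) \geq 0$. Taking $k = 2$ in Proposition \ref{sigma}(3) gives $\wt{e}_2^I(M) = \sum_{j \geq 1}\binom{j}{1}\sigma_j = \sum_{j \geq 1}j\,\sigma_j$, and since $r = \dim M = 2$, the coincidence of Hilbert coefficients in \ref{rrhilbertcoeff}(a) yields $e_2^I(M) = \wt{e}_2^I(M)$. Hence
\[
e_2^I(M) = \sum_{j \geq 1} j\, \sigma_j.
\]
As every $\sigma_j$ is a length and the coefficient $j$ is strictly positive for $j \geq 1$, the right-hand side vanishes exactly when $\sigma_j = 0$ for all $j \geq 1$, that is, exactly when $\wt{I^{j+1}M} = J\wt{I^jM}$ for all $j \geq 1$. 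This already renders condition (i) of the theorem in terms of the vanishing of the $\sigma_j$, and is nothing more than the one-line computation above.

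It remains to recognise the condition ``$\wt{I^{j+1}M} = J\wt{I^jM}$ for all $j \geq 1$'' as the statement that $\wt{G}_I(M)$ has minimal multiplicity, and this reconciliation is the only delicate point I anticipate. The quickest route is to appeal to the characterisation already used in the proof of Theorem \ref{e2dim2main}(c), where minimal multiplicity of $\wt{G}_I(M)$ is read off directly from $\wt{I^{i+1}M} = J\wt{I^iM}$. For an intrinsic verification I would instead invoke Proposition \ref{sigma}(2), which writes $\wt{h}_M^I(z) = (e_0^I(M) - \sigma_0) + \sum_{i \geq 1}(\sigma_{i-1} - \sigma_i)z^i$: since $J$ is a reduction and $\wt{I^nM} = I^nM$ for $n \gg 0$ we have $\sigma_i = 0$ for $i \gg 0$, so the vanishing of all $\sigma_j$ with $j \geq 1$ is equivalent to the $h$-polynomial $\wt{h}_M^I(z)$ having degree at most one, which is precisely the $h$-vector characterisation of minimal multiplicity for the two-dimensional \CM \ graded module $\wt{G}_I(M)$. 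Chaining these equivalences gives (i) $\Leftrightarrow$ (ii), with the main computational content carried entirely by Proposition \ref{sigma}.
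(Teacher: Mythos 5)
Your proposal is correct and follows essentially the same route as the paper: there, the nontrivial direction (i) $\Rightarrow$ (ii) is obtained via Proposition \ref{e2}(i), whose proof is exactly your computation $e_2^I(M)=\sum_{j\geq 1}j\,\sigma_j^I(M)$ (the identification $e_2^I(M)=\wt{e}_2^I(M)$ of \ref{rrhilbertcoeff}(a) being used implicitly), followed by the citation of \cite[1.2]{RV-big} to pass from $\wt{I^{j+1}M}=J\wt{I^{j}M}$ for $j\geq 1$ to minimal multiplicity; the converse is, as you say, immediate. The one point to tighten is your ``intrinsic verification'' of the last step: minimal multiplicity is defined in the paper as Cohen--Macaulayness of $\wt{G}_I(M)$ \emph{together with} $\deg \wt{h}_M^I(z)\leq 1$, and Proposition \ref{sigma}(2) only delivers the degree bound --- you assert that $\wt{G}_I(M)$ is \CM\ without argument. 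That is true here, but it requires a Valabrega--Valla-type verification for the Ratliff--Rush filtration (e.g.\ $\wt{I^nM}\cap JM=J\wt{I^{n-1}M}$ for all $n$, which follows from $\wt{I^{n+1}M}=J\wt{I^nM}$ for $n\geq 1$ together with $JM\subseteq \wt{IM}$), and this is precisely what the cited result \cite[1.2]{RV-big} packages. Your primary route through that characterisation is sound; just do not present the $h$-polynomial computation as a self-contained substitute for it.
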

 Recall that we say graded $G_I(A)$-module $E = \bigoplus_{n\geq 0} E_n$
 has minimal multiplicity if it is \CM \ and $\deg h_E (z) \leq 1.$

 We give
 \begin{proof}[Proof of  Theorem \ref{reformul}]
 The assertion $(2) \Rightarrow (1)$
 is clear.  Conversely note that by \ref{e2} we get  that
 \begin{equation*}
 \text{if} \ \  e^I_2 (M) = 0 \;\mbox{then}\; \widetilde{I^{j + 1} M} = J
 \widetilde{I^j M} \;\mbox{for}\; j \geq 1.
 \end{equation*}
 By \cite[1.2]{RV-big} it follows that $\widetilde{G}_I (M)$ has
 minimal multiplicity
 \end{proof}
  Theorem \ref{reformul} enables us to
 generalize  Narita's result as follows:

 \begin{theorem}\label{genNar}
 Let $M$ be $CM$ of dimension $r \geq 2.$  Then \tFAE
 \begin{enumerate}[\rm (i)]
 \item
  $e^I_2 (M) = \cdots = e^I_r (M) = 0$.
 \item
  $\tilde{G}_I(M)$; the associated graded module of the Ratliff-Rush filtration has minimal multiplicity.
 \end{enumerate}
 Furthermore if these condition hold then
 \begin{enumerate}[\rm (a)]
 \item
 $H^i(L^I(M)) = 0$ for $i = 1,\ldots, r-1$.
 \item
  $G_I(A)$ is generalized \CM \ with
 \[
 \I(G_I(M)) = (-1)^{r+1}r\bullet e^I_{r+1}(M)
 \]
 \end{enumerate}
 \end{theorem}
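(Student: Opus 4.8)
The plan is to route the whole theorem through the single statement (a), that $H^i(L^I(M)) = 0$ for $1\le i\le r-1$, treating it as the engine from which $(i)\Leftrightarrow(ii)$ and (b) follow. The implication $(ii)\Rightarrow(i)$ is immediate and needs no cohomology: if $\wt{G}_I(M)$ has minimal multiplicity then $\deg\wt{h}_I(M,z)\le1$, so $\wt{e}_i^I(M) = 0$ for every $i\ge2$, and then $e_i^I(M) = \wt{e}_i^I(M) = 0$ for $2\le i\le r$ by \ref{rrhilbertcoeff}(a). Everything else rests on proving (a) from the hypothesis $e_2^I(M) = \cdots = e_r^I(M) = 0$.

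I would prove (a) by induction on $r$. The base case $r = 2$ is precisely Theorem \ref{e2dim2main}(a). For $r\ge3$, choose an $M$-superficial $x$ and put $N = M/xM$; since $e_i^I(N) = e_i^I(M)$ for $i\le r-1$ by \ref{mod-sup-h}(3), the hypothesis descends to $N$, and the induction hypothesis gives $H^i(L^I(N)) = 0$ for $i = 1,\ldots,r-2$. Feeding this into the long exact sequence \ref{longH}: for each $i$ with $2\le i\le r-1$ the vanishing of $H^{i-1}(L^I(N))$ makes $\Psi_X\colon H^i(L^I(M))(-1)\to H^i(L^I(M))$ injective, and as $H^i(L^I(M))$ is $*$-Artinian (\ref{Artin}(a)) we get $H^i(L^I(M)) = 0$ from \ref{Artin-vanish}(a). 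This settles every degree except $i = 1$.

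The degree $i = 1$ is the crux and the step I expect to be genuinely hard. Here \ref{longH} with $H^1(L^I(N)) = 0$ only yields that $\Psi_X\colon H^1(L^I(M))(-1)\to H^1(L^I(M))$ is \emph{surjective} with kernel a quotient of the finite-length module $H^0(L^I(N))$; a surjective degree-raising endomorphism of a $*$-Artinian module need not vanish, since $\lambda(H^1(L^I(M))_n)$ is only forced to be non-increasing in $n$. The one extra ingredient that closes the argument is the finiteness $\lambda(H^1(L^I(M)))<\infty$: granting it, $\Psi_X$ is an isomorphism in all sufficiently negative degrees, so the lengths $\lambda(H^1(L^I(M))_n)$ are constant for $n\ll0$ and vanish for $n\gg0$ by \ref{Artin}(b), and a non-increasing sequence of lengths that vanishes for $n\gg0$ and is bounded-below-supported must be identically zero, giving $H^1(L^I(M)) = 0$. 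Thus the whole theorem reduces to establishing $\lambda(H^1(L^I(M)))<\infty$, equivalently, by Theorem \ref{alpha=fg}, that $\lambda(H^1(G_I(M)))<\infty$. I plan to extract this from the numerical hypothesis by a Hilbert-series computation for the dimension-$(r+1)$ module $L^I(M)$, whose Hilbert series is $h_I(M,z)/(1-z)^{r+1}$: comparing the alternating sum of the lengths $\lambda(H^j(L^I(M))_n)$ in low degrees against the Hilbert polynomial, which under $e_2^I(M) = \cdots = e_r^I(M) = 0$ is supported on its top two coefficients, should force the polynomial part of $\lambda(H^1(L^I(M))_n)$ to vanish. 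This quantitative finiteness statement is the main obstacle.

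Once (a) holds, the rest is formal. By Theorem \ref{rrMODsup} the Ratliff--Rush filtration behaves well modulo an $M$-superficial sequence $\bx$ of length $r-1$, so by Remark \ref{conseqGood} we get $\depth\wt{G}_I(M)\ge r$ and $\wt{G}_I(M)$ is \CM; moreover $\wt{G}_I(M)/\bx\wt{G}_I(M) = \wt{G}_I(M_{r-1})$ with $M_{r-1}$ one-dimensional. Applying Proposition \ref{sigma} to $M_{r-1}$ and using $\wt{e}_2^I(M_{r-1}) = \wt{e}_2^I(M) = e_2^I(M) = 0$ forces every $\sigma_j^I = 0$, so $\deg\wt{h}_I(M,z)\le1$, which together with Cohen--Macaulayness is exactly minimal multiplicity; this gives $(i)\Rightarrow(ii)$. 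For (b), the vanishing in (a) is condition (ii) of Theorem \ref{vogel}, so $G_I(M)$ is generalized \CM\ with $\I(G_I(M)) = r\cdot\lambda(H^0(G_I(M)))$. Minimal multiplicity gives $\wt{I^{i+1}M} = J\wt{I^iM}\subseteq I^iM$ for $i\ge1$, which identifies $H^0(G_I(M))$ with $H^0(L^I(M))$ as in \ref{e2dim2main}(c), while $\wt{e}_{r+1}^I(M) = 0$ and \ref{rrhilbertcoeff}(b) give $\lambda(H^0(L^I(M))) = r_I(M,1) = (-1)^{r+1}e_{r+1}^I(M)$; combining these yields $\I(G_I(M)) = (-1)^{r+1}r\bullet e_{r+1}^I(M)$.
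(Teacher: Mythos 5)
Your reduction of everything to statement (a), and your handling of $H^i(L^I(M))$ for $2\le i\le r-1$ via the long exact sequence \ref{longH} and \ref{Artin-vanish}, match the paper. But at the step you yourself flag as the crux --- showing $H^1(L^I(M))=0$ --- your proposal has a genuine gap. You propose to first establish $\lambda(H^1(L^I(M)))<\infty$ by ``a Hilbert-series computation \ldots which should force the polynomial part of $\lambda(H^1(L^I(M))_n)$ to vanish,'' and you concede this is the main obstacle. That step is not carried out, and it is not clear it can be: the vanishing of $e_2^I(M),\ldots,e_r^I(M)$ constrains the Hilbert polynomial of $L^I(M)$, whereas the finiteness of $\lambda(H^1(L^I(M)))$ is governed by the behaviour of $\lambda(H^1(L^I(M))_n)$ for $n\ll 0$, and $L^I(M)$ is not finitely generated, so the Grothendieck--Serre comparison does not apply to it directly. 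As written, the theorem is not proved.

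The paper closes exactly this step by a different and more elementary device, which is within reach of the material you already assembled: it carries \emph{minimal multiplicity of $\wt{G}_I(N)$} along in the induction (your own last paragraph shows how to deduce it for $N=M/xM$ from the inductive vanishing $H^i(L^I(N))=0$, $1\le i\le r-2$). With that in hand, one truncates \ref{longH} at $K=\ker\bigl(\Psi_x\colon H^1(L^I(M))(-1)\to H^1(L^I(M))\bigr)$ to get the finite-length exact sequence
\begin{equation*}
0 \rightarrow \B^I(x,M) \rightarrow H^0(L^I(M))(-1) \rightarrow H^0(L^I(M)) \rightarrow H^0(L^I(N)) \rightarrow K \rightarrow 0,
\end{equation*}
whence $\lambda(K)=\lambda(H^0(L^I(N)))-\lambda(\B^I(x,M))$. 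Minimal multiplicity of $\wt{G}_I(N)$ gives $\wt{e}^I_r(N)=0$, so $e^I_r(N)=(-1)^r\lambda(H^0(L^I(N)))$ by \ref{rrhilbertcoeff}, while $e^I_r(M)=0$ and \ref{mod-sup-h}(3) give $\lambda(\B^I(x,M))=(-1)^re^I_r(N)$. Hence $\lambda(K)=0$, the map $H^0(L^I(M))\to H^0(L^I(N))$ is surjective, and $\Psi_x$ on $H^1(L^I(M))$ is injective, so $H^1(L^I(M))=0$ by $*$-Artinianness --- no finiteness of $\lambda(H^1(L^I(M)))$ is ever needed. The remainder of your outline (the deduction of (ii) and of (b)) is essentially sound, modulo the minor point that invoking Theorem \ref{vogel} for (b) requires first knowing $G_I(M)$ is generalized Cohen--Macaulay, which the paper obtains directly from the first fundamental exact sequence \ref{dag}.
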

 \begin{proof}
 (ii) $\implies $ (i).   This is clear.

 (i) $\implies $ (ii)
 We prove following assertion by induction on $r$ where
 $r \geq 2$.
 \[
 e^I_i (M) = 0 \ \text{for} \ \ i = 2,\cdots, r \Rightarrow
 \begin{cases}
 (a)  \ H^i (L^I (M)) = 0 \ \text{for}\ i = 1, 2, \cdots, r - 1\\
 (b) \ \tilde{G}_I (M) \ \text{ has minimal multiplicity.}
 \end{cases}
 \]
 For $r = 2,$ this follows from Theorem \ref{e2dim2main}.
 We assume $r \geq 3$ and that the result holds for $r - 1.$

  Let
 $x \in I$ be $M$-superficial \wrt \  $I.$  Set $N = M/xM.$
 By \ref{mod-sup-h}(3) we get $e_i (N) = 0$ for $i = 2,\cdots, r - 1.$  So by
 induction hypothesis; we get $H^i (L^I (N)) = 0$ for $i = 1, 2,\ldots, r - 2$ and $\widetilde{G}_I(N)$ has minimal
 multiplicity.

 By \ref{longH} and \ref{Artin-vanish} we get  $H^i (L^I (M)) = 0$ for $i = 2, \cdots, r - 1.$
 Let $K = \ker (\Psi_x : H^1 (L^I (M))) (-1) \rightarrow H^1 (L^I
 (M)).$
  We use  the following part of \ref{longH}
 \begin{equation*}
 0 \rightarrow \B \rightarrow H^0 (L^I (M)) (-1) \rightarrow H^0
 (L^I (M)) \rightarrow H^0 (L^I (N)) \rightarrow K \rightarrow 0 \tag{$\dagger$}
 \end{equation*}
 Notice $\dim N = r-1 \geq 2$. By \ref{rrhilbertcoeff}  and the fact  $\widetilde{G}_I(N)$ has minimal
 multiplicity  we get
 \begin{equation*}
 e_{r}^{I}(N) =  (-1)^r \sum_{r \geq 1} \lambda
 \left(\frac{\widetilde{I^j N}}{I^j N}\right) = (-1)^r \lambda (H^0
 (L^I (N))) \tag{*}
 \end{equation*}
 By \ref{mod-sup-h}(3) we also have
 \begin{equation*}
 e_{r}^{I}(M) = e_{r}^{I}(N) - (-1)^r \lambda (\B).
 \end{equation*}
 Since $e_{r}^{I}(M) = 0$. we get
 \begin{equation*}
 \lambda (B) = (-1)^r e_r (N). \tag{**}
 \end{equation*}
 By computing lengths from ($\dagger$) and using (**) we get
 \begin{equation*}
 (-1)^r e_r (N) - \lambda (H^0 (L^I (N))) + \lambda (K) = 0.
 \end{equation*}
 Using (*) it follows that $K= 0$; i.e., the natural map $H^0(L^I(M)) \rt H^0(L^I(N))$ is surjective. By Theorem \ref{h1zero} we get
 $H^1(L^I(M)) = 0$.
 Thus we have proved $H^i(L^I(M)) = 0$ for $1 \leq i \leq r -
 1.$

 Since $H^1(L^I(M)) = 0$ the Ratliff-Rush filtration on $M$ \wrt \ $I$ behaves well mod a superficial element.
 So
 \begin{equation*}
 \frac{\widetilde{G}_I (M)}{x^{\ast} \widetilde{G}_I (M)} \cong
 \widetilde{G}_I (N).
 \end{equation*}
 Thus $\wt{G}_I(M) $   is \CM \ and $\widetilde{h}_N (z) = \widetilde{h}_M (z).$  It follows that
 $\widetilde{G}_I(M)$ has minimal multiplicity.

 We now prove the other assertions.\\
 (a): This was proved while showing (ii) implies (i). \\
 (b): Using the  first fundamental exact sequence \ref{dag} and the corresponding long exact sequence in cohomology we get
 $H^{i}(G_I(M)) = 0$ for $i \geq 2$. Furthermore we have an exact sequence
 \[
 0 \rt H^0(G_I(M)) \rt H^0(L^I(M)) \rt H^0(L^I(M))(-1) \rt H^1(G_I(M)) \rt 0
 \]
 Since $\wt{G}_I(M)$ has minimal multiplicity it follows that
 $\wt{I^{n}}M \subseteq JM$ for all $n \geq 1$; \cite[1.9]{GuRossi}. It follows that
 \[
 H^0(G_I(M)) \cong H^0(L^I(M)). \quad \text{So} \  H^0(L^I(M))(-1) \cong H^1(G_I(M)) \ \  \text{also}.
 \]
 It follows that $\I(G_I(M)) = r \lambda(H^0(L^I(M)))$. The result follows by 1.8(b).
 \end{proof}

 Let $I$ be an ideal which satisfies generalized Narita's theorem.  It is possible to have $\depth G_I(A) = 0$ as shown by the following example
 taken from \cite[3.8]{Cpr} shows
 \begin{example}  \label{CPR}
 Let $A = \mathbb{Q}[[x,y,z]]$. Let $I= (x^2-y^2, y^2-z^2, xy,xz,yz)$. Set $\m = (x,y,z)$. It can be verified that
 $I^2 =  \m^4$. So $\red(I^n) = 2$ for all $n \gg 0$.
 Using COCOA  we get
 $h_I(A,z) =  5 + 6z^2 - 4z^3+z^4$. So $e_j^I(A) = 0$ for $j = 2,3$.
 In \cite{Cpr} it is  proved that $\depth G_I(A) = 0$.
 \end{example}
%%%%%%%%%%%%%%%%%%%%%%%%%%%%%%%%%%%%%%%%%%%%%%%%%%%%%%%%%%%%%%%%%%%%%%%%%%%
Recall that $$\xi_I(M) = \lim_{n\to\infty} \depth G_{I^n}(M).$$
When $e_2^I(M) = 0$ and dimension $M = 3$ we prove that either $\fg_I(M) = 1$ or $= 3$.
This example will illustrate some of the techniques developed in this paper.
\begin{example}\label{e2dim3} Let $A$ be local and let
$M$ be  a finite Cohen-Macaulay $A$-module of dimension $ r = 3$. Let $I$ be an
ideal of definition for $M$.
 Assume
$e_{2}^{I}(M) = 0$.
then $e_{3}^{I}(M) \leq 0$.
Furthermore we have
\begin{enumerate}[\rm (a)]
\item
\TFAE
\begin{enumerate}[\rm (i)]
\item
$e_{3}^{I}(M) < 0$
\item
 $ \xi_I(M) = 1 $.
 \item
$\fg_I(M) = 1$.
\end{enumerate}
\item
\TFAE
\begin{enumerate}[\rm (i)]
\item
$e_{3}^{I}(M) = 0$
\item
$\xi_(M)  = 3$.
\item
 $\fg_I(M) = 3$.
 \end{enumerate}
\end{enumerate}
\end{example}
\begin{proof}
We first prove that $e_3^I(M) \leq 0$.

Let $x$ be $M$-superficial \wrt \ $I$. Set $N = M/xM$. Then by
Theorem \ref{e2dim2main} we have $H^1(L^I(N)) = 0$. Since $H^1(L^I(N)) = 0$ and $H^2(L^I(M))$ is *-Artinian,  using
(\ref{longH})
we get
\begin{equation*}
    H^2(L^I(M)) = 0. \tag{$\dagger$}
\end{equation*}
 We also use
 the following part of (\ref{longH})
\begin{equation*}
0 \xar \B^{I}(x, M) \xar H^0(L^I(M))(-1) \xar   H^0(L^I(M))  \xar H^0(L^I(N)).
\end{equation*}
Therefore we have
\begin{equation}
\label{p2e1uu}
b :=  \lambda\left(\B^{I}(x,M)\right) \leq  \lambda\left(H^0(L^I(N))\right) =: r.
\end{equation}
Since $e_{i}^{I}(N) = e_{i}^{I}(M)$ for $i = 0,1,2$, we get by
using Propositions \ref{e2}.(i), \ref{chi2}.(i) and \ref{sigma}
that $\wt{e_{3}}^{I}(N) = 0$. Using \ref{rrhilbertcoeff} we get that
$e_{3}^{I}(N) = -r$.
So we have
\begin{equation*}
 b \leq - e_{3}^{I}(N). \tag{*}
\end{equation*}
By \ref{mod-sup-h}.3(b) we get that
$e_{3}^{I}(M)= e_{3}^{I}(N) + b$.  Using (*) we get
$e_{3}^{I}(M) \leq 0$ with equality if and only if $b = r$.

Notice if $b = r$ then the map $ H^0(L^I(M))  \rt H^0(L^I(N)) $ is surjective.
Using \ref{h1zero} we get $H^1(L^I(M)) = 0$. Since $H^0(L^I(M))_{-1} = 0$ it follows
from \ref{power-of-I} that $\xi_I(M) = 3$.

We also note that
\begin{equation}\label{e3st}
e_3^{I^n}(M) = e_3^I(M) \quad \text{for all} \ n \gg 0.
\end{equation}

We now prove our assertions \\ (b) (i) $\implies$ (ii) \\
This follows from the fact that $e_3^I(M) = 0$ \ff \ $b = r$. We have shown above that if $b = r$ then
$\xi_I(M) = 3$.\\
(b) (ii) $\implies$ (i) \\ If $\xi_I(M) = 3$; say $G_{I^r}(M)$ is \CM. Then clearly all Hilbert coefficients
of $M$ is non-negative. Using \ref{e3st} we get $e_3^I(M) \geq 0$. But we have shown earlier that in general
if $e_2^I(M) = 0$ then $e_3^I(M) \leq 0$. So
 we have $e_3^I(M) = 0$.

 We now prove   \\
 (a) (i) $\implies$ (ii) \\ If $e_3^I(M) < 0$ then it follows from \ref{e3st} and \cite[Corollary 2]{Marley-89} that $\xi_I(M) = 1$.\\
 (a) (ii) $\implies$ (i) \\  This follows from
 (b) (i) $\implies$ (ii) and the fact that $e_3^I(M) \leq 0$ always when $e_2^I(M) = 0$.

 Before proving the remaining assertions let us note the following
  \[
    3 \geq \fg_I(M) \geq \xi_I(M) \geq 1
  \]
  So
 (b)(ii) $\implies$ (iii)  and
 (a) (iii) $\implies$ (i) \\

 To prove the   rest note that it suffices to show
 that if $e_2^I(M) = 0$ then
 \[
  \fg_I(M) \geq 2 \implies   \xi_I(M) = 3.
 \]
 If $\fg_I(M) \geq 2$ then note that $\lambda(H^1(L^I(M))) $ is finite.  Since
 $H^2(L^I(M)) = 0$, we get from \ref{longH}   an exact sequence
 \[
   H^0(L^I(N)) \rt H^1(L^I(M))(-1) \rt H^1(L^I(M)) \rt 0
 \]
   Therefore    $H^1(L^I(M)) = 0$.       The result follows from \ref{power-of-I}.
\end{proof}
%666666666666666666666666666666666666666666666666
%77777777777777777777777777777777777777777777777777777777777777777777777777777777777777

%%%%%%%%%%%%%%%%%%%%%%%%%%%%%%%%%%%%%%%%%%%%%%%%%%%%%%%%%%%%%%%%%%%%%%%%%%%
\section{Asymptotic invariants.}
Throughout $M$ is \CM \ $A$-module of dimension $r$ and $I$ is an ideal of definition for $M$.
Suppose $\fg_I (M) = s.$  We compute  $H^i (G_{I^n} (M))$ for $n \gg 0$ and $0\leq i \leq s-1$.
The basic idea is to use is that $L^I(M)(-1)$ behaves well with respect to the Veronese functor. We have
\[
\left(L^I(M)(-1) \right)^{<l>} = L^{I^l}(M)(-1) \quad \text{for  } \ l \geq 1.
\]
We also prove that if $x$ is $M$-superficial \wrt \ $I$ then
\[
  \xi_I(M/x^sM) \geq \xi_I(M)-1 \quad \text{for all} \ s \gg 0.
\]

\begin{theorem}\label{asymp}
Let $(A,\m)$ be Noetherian local and
let $M$ be a $CM$ $A$-module of dimension $r \geq 1.$  Let $I$ be an ideal for definition of $M$.  Let $\fg_I
(M) = s.$  Then for $n \gg 0$ we have
\begin{enumerate}[\rm (a)]
\item
$H^0 (G_{I^n} (M)) = 0$.
\item
 $$ H^1 (G_{I^n} (M))_j =\left\{{\begin{array}{rll}
0 & \mbox{for} & j \neq -1\\
H^1 (L^I (M))_{-1} & \mbox{for}  & j = -1\end{array}}\right. %}
$$
\item
 For $2\leq i \leq s-1$ we have
$$ H^i (G_{I^n} (M))_j =\left\{{\begin{array}{rlll}
0 & \mbox{for} & j \neq 0, -1\\
H^i (L^I (M))_{-1} & \mbox{for}  & j = -1\\
H^{i - 1} (L^I (M))_{-1} & \mbox{for} & j = 0.\end{array}}\right. %}
$$
\end{enumerate}
\end{theorem}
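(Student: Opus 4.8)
The plan is to reduce the computation of $H^i(G_{I^n}(M))$ to the local cohomology of $L^I(M)$ by combining the first fundamental exact sequence \ref{dag} for the ideal $I^n$ with the good behaviour of $L^I(M)(-1)$ under the Veronese functor. Recall from Theorem \ref{alpha=fg} that $\fg_I(M) = \alpha_I(M) = s$, so $\lambda(H^i(L^I(M))) < \infty$ for $i = 0,1,\ldots,s-1$; in particular each such $H^i(L^I(M))$ is supported in only finitely many degrees.

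First I would pin down the local cohomology of $L^{I^n}(M)$ for $n \gg 0$. Using the identity $\left(L^I(M)(-1)\right)^{<n>} = L^{I^n}(M)(-1)$, that local cohomology commutes with shifts, and that it commutes with the Veronese functor, one obtains for $i \leq s-1$ and all $m \in \Z$
\[
H^i\left(L^{I^n}(M)(-1)\right)_m = H^i(L^I(M))_{nm-1}.
\]
Since $H^i(L^I(M))$ has bounded support, for $n \gg 0$ the only integer of the form $nm-1$ lying in that support is $nm-1=-1$, i.e.\ $m=0$; hence $H^i(L^{I^n}(M)(-1))$ is concentrated in degree $0$ with value $H^i(L^I(M))_{-1}$, and after the shift
\[
H^i\left(L^{I^n}(M)\right)_m = \begin{cases} H^i(L^I(M))_{-1} & m = -1, \\ 0 & m \neq -1, \end{cases}
\]
for $0 \leq i \leq s-1$. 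In particular $H^0(L^{I^n}(M)) = 0$, since $H^0(L^I(M))_{-1} = 0$ (by \ref{zero-lc} it sits in non-negative degrees).

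Next I would feed these into the long exact sequence in local cohomology arising from the first fundamental exact sequence for $I^n$, namely $0 \to G_{I^n}(M) \to L^{I^n}(M) \xrightarrow{\Pi} L^{I^n}(M)(-1) \to 0$. The decisive observation is that for $n \gg 0$ the induced maps $H^i(\Pi)$ vanish for $i \leq s-1$ for a trivial reason: $H^i(\Pi)$ is degree-preserving, its source is supported only in degree $-1$ while its target is supported only in degree $0$, so every graded component of $H^i(\Pi)$ is a map to or from $0$. Consequently the long exact sequence breaks into
\[
0 \to H^{i-1}\left(L^{I^n}(M)(-1)\right) \to H^i(G_{I^n}(M)) \to H^i\left(L^{I^n}(M)\right) \to 0
\]
for $1 \leq i \leq s-1$, while $H^0(G_{I^n}(M)) \hookrightarrow H^0(L^{I^n}(M)) = 0$ gives (a). Reading off degrees $-1$ and $0$ (the submodule contributes $H^{i-1}(L^I(M))_{-1}$ in degree $0$, the quotient contributes $H^i(L^I(M))_{-1}$ in degree $-1$) then yields (b) for $i=1$ and (c) for $2 \leq i \leq s-1$.

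The step I expect to be the main obstacle is the careful justification that local cohomology commutes with the Veronese functor at the level of $H^i_{\M}$, that is, $\left(H^i_{\M}(E)\right)^{<n>} \cong H^i_{\M'}(E^{<n>})$ where $\M'$ is the graded maximal ideal of $\R^{<n>} = \mathcal{R}(I^n)$, together with the attendant bookkeeping of the degree shift so that the index $n$ of $I^n$ genuinely plays the role of the Veronese parameter. Once this is in place, everything else is forced by the finite-length hypothesis and by the disjointness of the degrees $-1$ and $0$.
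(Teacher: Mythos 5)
Your proposal is correct and follows essentially the same route as the paper: both reduce to the fact that $H^i(L^{I^n}(M))$ is concentrated in degree $-1$ for $n\gg 0$ (via the Veronese identity $\left(L^I(M)(-1)\right)^{<n>}=L^{I^n}(M)(-1)$ together with $\alpha_I(M)=\fg_I(M)=s$), and then read off $H^i(G_{I^n}(M))$ from the long exact sequence of the first fundamental sequence, using that source and target of $H^i(\Pi)$ live in disjoint degrees. Your packaging of the degree argument as the vanishing of the maps $H^i(\Pi)$, yielding short exact sequences, is just a cleaner phrasing of the paper's degree-by-degree computation.
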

\begin{proof}
By Theorem 5 it follows that $\alpha_I (M) = s.$  Fix $i$ with $0 \leq i \leq s-1$.

 Since $H^i
(L^{I^n} (M) (-1)) \simeq H^i (L^{1} (M) (-1))^{<n>}$ for all $n$;
it follows that for $n \gg 0$ we have
\begin{equation*}
H^i (L^{I^n} (M))_j = 0 \;\mbox{for}\; j \neq -1. \tag{*}
\end{equation*}
Assume that for $n \geq n_0$ the assertion $(*)$ holds for all $i = 0, 1,\ldots, s-1$ and that $H^0 (L^{I^n} (M)) =
0.$
Fix $n \geq n_0.$  Set $K = I^n.$

 (a) This holds by construction.\\

 (b) Set $G = G_K (M)$ and $L = L^K (M)$
By the first fundamental sequence  \ref{dag}  and the corresponding long exact sequence in cohomology we obtain for all $j \in \Z$
\begin{equation*}
0 \rightarrow H^1 (G)_j \rightarrow H^1 (L)_{j-1} \rightarrow H^1 (L)_j \rightarrow H^2
(G)_j \rightarrow H^2 (L)_j \rightarrow H^2 (L)_{j-1} \tag{$\dagger$}
\end{equation*}
 Using the above exact sequence we get (b).\\
 (c)
Also note that $(\dagger)$ also implies
\begin{align*}
H^2 (G)_j &= 0 \quad \text{for} \ j\neq 0, -1 \\
H^2 (G)_0 \ &\cong H^1 (L)_{-1} \\
H^2 (G)_{-1} \  &\cong H^2(L)_{-1}.
\end{align*}
 Thus we have proved (c) when $i = 2$. For $i \geq 3$ the proof is similar to the case $i = 2$.
\end{proof}

\begin{definition} We call the
 $A$-modules $H^i (L^I (M))_{-1}$ for $i = 1,\cdots,r - 1$
the asymptotic invariants of $G_I (M)$
\end{definition}

\s Recall $\xi_I (M) = \depth G_{I^n} (M)$ for all $n \gg 0.$ Notice that
$\xi_{I^r}(M) = \xi_I(M)$ for any $r \geq 1$.

Let $x \in I$ be $M$-superficial \wrt \ $I$.
One of question that we want to answer is whether
\begin{equation*}
\xi_I (M/xM) \geq \xi_I (M) - 1 ?
\end{equation*}
Although we have not been able to answer the question above in
general we prove

\begin{theorem}\label{xi-mod-x-large}
$\xi_I (M/x^n M) \geq \xi_I (M) - 1$ for all $n \gg 0$.
\end{theorem}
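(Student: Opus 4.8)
The plan is to realise $L^I(M/x^sM)$ as the cokernel of a single multiplication map on $L^I(M)$ and then extract the local cohomology from the resulting long exact sequence, in the spirit of the second fundamental exact sequence \ref{dagg} but with the power $x^s$ in place of $x$. Write $t=\xi_I(M)$. By the description of $\xi_I$ in \ref{power-of-I} this means $H^i(L^I(M))_{-1}=0$ and $\lambda(H^i(L^I(M)))<\infty$ for $0\le i\le t-1$; and I must produce an $s_0$ so that for $s\ge s_0$ one has $H^i(L^I(M/x^sM))_{-1}=0$ and $\lambda(H^i(L^I(M/x^sM)))<\infty$ for $0\le i\le t-2$, which is precisely the assertion $\xi_I(M/x^sM)\ge t-1$.

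First I would note that $x^s$ is again $M$-superficial \wrt\ $I$: iterating $(I^{n+1}M\colon_M x)=I^nM$ (valid for $n\gg 0$) gives $(I^{n+1}M\colon_M x^s)=I^{n+1-s}M$ for $n\gg 0$. Multiplication by $u^s=x^st^s$, a homogeneous element of degree $s$ in $\R$, is an $\R$-linear map $\Psi^{(s)}\colon L^I(M)(-s)\to L^I(M)$ whose degree-$n$ cokernel is $M/(I^{n+1}M+x^sM)=L^I(M/x^sM)_n$. Thus one obtains a four term exact sequence of $\R$-modules
\[
0\to \mathcal{K}_s \to L^I(M)(-s)\xrightarrow{\ \Psi^{(s)}\ } L^I(M)\to L^I(M/x^sM)\to 0,
\]
the analogue of \ref{dagg} for $x^s$, where $\mathcal{K}_s$ has degree-$n$ component $(I^{n+1}M\colon_M x^s)/I^{n+1-s}M$. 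The one point to check is that $\mathcal{K}_s$ has finite length: each component is a submodule of the finite length module $M/I^{n+1-s}M$, it vanishes for $n<s$, and by superficiality of $x^s$ it vanishes for $n\gg 0$; hence $H^j(\mathcal{K}_s)=0$ for $j>0$.

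Splitting the four term sequence as $0\to\mathcal{K}_s\to L^I(M)(-s)\to\mathcal{C}_s\to 0$ and $0\to\mathcal{C}_s\to L^I(M)\to L^I(M/x^sM)\to 0$ with $\mathcal{C}_s=\image\Psi^{(s)}$, the vanishing $H^{>0}(\mathcal{K}_s)=0$ yields $H^j(\mathcal{C}_s)\cong H^j(L^I(M))(-s)$ for $j\ge 1$. Substituting this into the long exact sequence of the second short exact sequence gives, for every $i\ge 1$,
\[
H^i(L^I(M))\to H^i(L^I(M/x^sM))\to H^{i+1}(L^I(M))(-s)\xrightarrow{H^{i+1}(\Psi^{(s)})} H^{i+1}(L^I(M)).
\]
Fix $i$ with $1\le i\le t-2$. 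Then $i\le t-1$ and $i+1\le t-1$, so both $H^i(L^I(M))$ and $H^{i+1}(L^I(M))$ have finite length; hence $H^i(L^I(M/x^sM))$, being an extension of a submodule of $H^{i+1}(L^I(M))(-s)$ by a quotient of $H^i(L^I(M))$, also has finite length. The case $i=0$ is automatic, since $H^0(L^I(-))$ is concentrated in non-negative degrees and has finite length by \ref{zero-lc}.

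Finally, for the degree $-1$ statement I would pass to the $(-1)$-graded strand of the displayed sequence. Since $H^i(L^I(M))_{-1}=0$ for $i\le t-1$, the map $H^i(L^I(M/x^sM))_{-1}\to H^{i+1}(L^I(M))_{-1-s}$ is injective for $1\le i\le t-2$ (and $H^0(L^I(M/x^sM))_{-1}=0$ trivially). Each $H^{i+1}(L^I(M))$ with $i+1\le t-1$ is a finite length graded module, so it is concentrated in finitely many degrees; taking $s$ large enough, uniformly over the finitely many indices $i\le t-2$, forces $H^{i+1}(L^I(M))_{-1-s}=0$ and hence $H^i(L^I(M/x^sM))_{-1}=0$. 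This gives $\xi_I(M/x^sM)\ge t-1$ for $s\gg 0$. The heart of the argument, and the only place the shift matters, is this last step: the entire gain of ``$-1$'' comes from the shifted copy $H^{i+1}(L^I(M))(-s)$, and it is precisely the finiteness of the lower local cohomologies of $L^I(M)$ encoded by $\xi_I(M)=t$ that lets a large shift annihilate the degree $-1$ obstruction. The main obstacle to watch is therefore keeping the two finiteness inputs straight: finite length of $\mathcal{K}_s$ (needing superficiality of $x^s$) and finite length of $H^{i+1}(L^I(M))$ for $i+1\le t-1$.
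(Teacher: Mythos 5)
Your argument is correct, and it reaches the conclusion by a genuinely different mechanism than the paper. The paper first passes to the Veronese: it replaces $I$ by $K=I^n$ for $n\gg 0$, uses $L^{K}(M)(-1)=\left(L^I(M)(-1)\right)^{<n>}$ to concentrate $H^i(L^{K}(M))$ in degree $-1$ for $i\le \fg_I(M)-1$, and then applies the ordinary second fundamental exact sequence \ref{dagg} for the element $x^n\in K$; the degree $-1$ obstruction for $M/x^nM$ then lands in $H^{i+1}(L^K(M))_{-2}$, which vanishes by the Veronese concentration, and one finishes with the invariance $\xi_{I^n}(M/x^nM)=\xi_I(M/x^nM)$. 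You instead keep the ideal $I$ fixed and build a degree-$s$ analogue of \ref{dagg}, realizing $L^I(M/x^sM)$ as the cokernel of multiplication by $u^s$ on $L^I(M)(-s)$; the obstruction then lands in $H^{i+1}(L^I(M))_{-1-s}$, which dies for $s\gg 0$ because $H^{i+1}(L^I(M))$ has finite length for $i+1\le\xi_I(M)-1$. Both proofs hinge on the same two finiteness inputs (finite length of the colon module, coming from superficiality of $x$, and finite length of $H^j(L^I(M))$ for $j<\xi_I(M)$), but yours trades the Veronese functor and the identity $\xi_{I^n}=\xi_I$ for the construction and exactness check of the $x^s$-sequence; the result is a more self-contained argument, at the cost of having to verify that $(I^{n+1}M\colon_M x^s)=I^{n+1-s}M$ for $n\gg 0$ and that the kernel $\mathcal{K}_s$ has finite length, both of which you do correctly. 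The separate treatment of $i=0$ via \ref{zero-lc} and the uniform choice of $s$ over the finitely many indices $i\le\xi_I(M)-2$ are handled properly as well.
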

\begin{proof}
Notice $\fg_I(M) \geq \xi_I(M)$.
Assume for all $n \geq n_0,$
we have $H^i(L^{I^n}(M))_n = 0$ for all $n \neq -1$ and for $i = 0,\ldots, \fg_I(M) -1$.
   Fix $n \geq n_0.$

Set $L = L^{I^n} (M) , \bar{L} = L^{I^n} (M/x^n M).$  By \ref{longH}
 we have that for all $i \geq 0$ and $j \in \Z$;
\begin{equation*}
H^i (L)_{(j - 1)} \rightarrow H^i (L)_{j} \rightarrow H^i
(\bar{L})_{j} \rightarrow H^{i + 1} (L)_{j - 1} \rightarrow
H^{i+1} (L)_{j}
\end{equation*}
Set $c = \xi_I (M).$
Then $H^i (\bar{L})_{- 1} = 0$ for $i < c-1.$
 Clearly $\fg_I (M/x^n M) \geq \fg_I (M) - 1 \geq c -1.$

It follows from \ref{power-of-I} that
$$\xi_{I^n} (M/x^n M) \geq c -1.$$
Since $\xi_I (M/x^n M) = \xi_{I^n} (M/x^n M)$ the result follows.
\end{proof}

The following example shows that strict inequality can occur in Theorem \ref{xi-mod-x-large}.

\begin{example}
Let $(B,\n)$ be a two dimensional $CM$ local ring with
 an
$\n$-primary ideal $J$ such that $e_2(J) = 0$ and $G_J(B)$ has depth zero.
It can be easily checked that this is equivalent to $\tilde{J} \neq J$.
For a specific example of this kind we use an example from \cite{Marley-89}:
\[
  R = K[X,Y], \quad I = (X^7,X^6Y, XY^6, Y^7).
\]
 It is proved in \cite[page 8]{Marley-89} that $\depth G_I(A) = 0$ and that $e_2^I(A) = 0$. It is easily shown
 that this implies $\wt{I} \neq I$.

Notice by \ref{e2dim2main} that $H^1 (L^J(B)) = 0$ and  $e_3^I(B) < 0$.

 Set $A = B [X]_{(\n,x)}$ and $I =
(J,X).$
Then $G_I (A) \cong G_J (B) [X^*]$. We claim

 \begin{enumerate}[\rm (1)]
   \item
    $\xi_I(A) = 1.$
    \item
    $\fg_I(A) = 1$.
   \item
   $\xi_I(A/(X^n)) \geq 2$ for all $n \geq 1$.
 \end{enumerate}
 \emph{Proof of Claim:} \\
Notice
$e_3^I(A) = e^3_I(B) < 0$. So (1) and (2) follow from \ref{e2dim3}(a).

 (3). Since $H^0(L^I(B))_n = 0$ for $n< 0$ and $H^1(L^I(B)) =0$; by \ref{longH} we get  $\lambda(H^1 (L^I (A))_n) =
\lambda(H^1 (L^I (A))_{-1})$  for all $n < 0.$  So

   Fix $n
\geq 1$  Set $K = I^n$.  Notice
\begin{equation*}
H^0 (L^K (A)) (-1) = H^2 (L^K (A)) = 0.
\end{equation*}
 Set $N = A/(x^n)$. Using \ref{longH} we get
\begin{equation*}
0 \rightarrow H^0 (L^K (N))_n \rightarrow H^1 (L^K (A))_{n - 1}
\rightarrow H^1 (L^K (A))_n \rightarrow H^1 (L^K (N))_n
\rightarrow 0
\end{equation*}
At any rate $H^0 (L^K (N))_j = 0$ for $j < 0.$
Also $H^1 (L^K (A))_j \cong H^1 (L^K (A))_{-1} \forall j < 0.$
It follows that $H^{1}(L^K(N))_j = 0$ for $j < 0$.
Therefore $\xi_K (A/(X^n)) \geq 2.$  It follows that $\xi_I (A/(X^n))
\geq 2.$
\end{example}
%%%%%%%%%%%%%%%%%%%%%%%%%%%%%%%%%%%%%%%%%%%%%%%%%%%%%%%%%%%%%%%%%%%%%%%%%%%
\section{$\m$-primary ideals  with reduction number $2$}

               To give bounds on $\xi_I(M)$ in terms of Hilbert coefficients is in general a difficult task. Surprisingly the   following holds
\begin{theorem}\label{red2}
Let $M$ be a \CM \ module of dimension $3$ and let $I$ be an ideal
of definition for $M$ with $\red_I (M) = 2.$  Then $e^I_3 (M) \leq
0.$ Furthermore
 $$e^I_3 (M) = 0    \text{ \ff} \ \xi_I(M) \geq 2.$$
\end{theorem}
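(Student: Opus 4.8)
The plan is to base change to an infinite residue field via \cite[1.4]{Pu5}, fix an $M$-superficial $x$ and set $N = M/xM$ (so $\dim N = 2$ and $\red_I(N) \le 2$), and then reduce everything to a comparison of the Ratliff-Rush filtrations of $M$ and $N$. Since the Ratliff-Rush associated graded module always has positive depth, I would take $x$ general, so that $x^*$ is regular on $\wt{G}_I(M)$; then $\wt{h}_I(M,z)$ equals the $h$-polynomial of the image filtration $\mathcal{G}_n = \rho^x(\wt{I^nM})$ on $N$. Comparing $\mathcal{G}$ with the Ratliff-Rush filtration of $N$ exactly as in \ref{rrhilbertcoeff} gives
\[
\wt{h}_I(M,z) = \wt{h}_I(N,z) + (1-z)^3 Q(z), \qquad Q(1) = \sum_{n \ge 1}\lambda\big(\wt{I^nN}/\mathcal{G}_n\big) =: q \ge 0,
\]
so that, using $e_3^I(M) = \wt{e}_3^I(M)$ from \ref{rrhilbertcoeff}, one gets $e_3^I(M) = \wt{e}_3^I(N) - q$.

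The first key step would be to prove $\wt{e}_3^I(N) = 0$, i.e. that the Ratliff-Rush filtration of the two-dimensional $N$ has reduction number $\le 2$. I would choose $y$ general with $y^*$ regular on $\wt{G}_I(N)$; then $\wt{h}_I(N,z)$ is the $h$-polynomial of the image filtration $\overline{\mathcal{G}}_n = \rho^y(\wt{I^nN})$ on the one-dimensional $\bar N = N/yN$. In dimension one a direct colon computation shows $\wt{I^n\bar N} = I^n\bar N$ for $n \ge 2$ (using $\red_I(\bar N) \le 2$ and that $\bar N$ is \CM), and squeezing $I^n\bar N \subseteq \overline{\mathcal{G}}_n \subseteq \wt{I^n\bar N}$ forces $\overline{\mathcal{G}}_n = I^n\bar N$ for $n \ge 2$. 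Hence the Hilbert function of $\overline{\mathcal{G}}$ is constant from $n = 2$ on, so $\deg \wt{h}_I(N,z) \le 2$ and $\wt{e}_3^I(N) = 0$. With the previous display this gives $e_3^I(M) = -q \le 0$, with equality iff $q = 0$; and $q = 0$ says precisely that the Ratliff-Rush filtration of $M$ behaves well mod $x$, equivalently $H^1(L^I(M)) = 0$ by Theorem \ref{h1zero}.

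It then remains to show $H^1(L^I(M)) = 0 \Longleftrightarrow \xi_I(M) \ge 2$. The implication $\Rightarrow$ is immediate from \ref{power-of-I}: if $H^1(L^I(M)) = 0$ neither $i = 0$ nor $i = 1$ can be the defining index of $\xi_I(M)$. For the converse I would assume $\xi_I(M) \ge 2$, so $H^1(L^I(M))_{-1} = 0$, and first note that the same one-dimensional input gives $H^1(L^I(N))_n = 0$ for all $n \ge 0$: modding by $y$, $H^0(L^I(\bar N))_n = 0$ for $n \ge 1$, so by \ref{longH} the map $\Psi$ is injective on $H^1(L^I(N))$ in degrees $\ge 1$, whence $H^1(L^I(N))_n = 0$ for $n \ge 0$ by \ref{Artin}. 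Feeding this into \ref{longH} for the pair $(M,N)$: because $H^1(L^I(N))_n = 0$ for $n \ge 0$ the map $\Psi_X\colon H^1(L^I(M))_{n-1} \to H^1(L^I(M))_n$ is surjective for $n \ge 0$, and because $H^0(L^I(N))_n = 0$ for $n \le -1$ it is injective for $n \le -1$. Starting from $H^1(L^I(M))_{-1} = 0$, these two chains propagate the vanishing upward and downward, giving $H^1(L^I(M)) = 0$, hence $q = 0$ and $e_3^I(M) = 0$.

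The hard part will be the reduction-number-two computation $\wt{e}_3^I(N) = 0$ of the second paragraph, since it is what turns the a priori sign-indefinite $\wt{e}_3^I(N) - q$ into $-q \le 0$. The subtle point is that the ordinary $I$-adic reduction number does not bound $\deg \wt{h}_I(N,z)$ when $\depth G_I(N) = 0$; the argument sidesteps this by descending along a $\wt{G}_I(N)$-regular element to dimension one, where the Ratliff-Rush and $I$-adic filtrations already agree in all degrees $\ge 2$. I would also need the single general $y$ to be simultaneously superficial, $\wt{G}_I(N)$-regular and part of a minimal reduction, which is exactly where the passage to an infinite residue field is used.
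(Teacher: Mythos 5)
Your proof is correct, but it takes a genuinely different route from the paper's. The paper passes to a high Veronese $K=I^r$, invokes the vanishing theorems of Hoa and Trung (this is where $\red_I(M)=2$ enters, via $H^3(G_{I^n}(M))_j=0$ for $j\ge 0$), and then applies the Grothendieck--Serre difference formula to obtain the identity $e_3^I(M)=-\lambda(H^1(L^I(M))_{-1})$, from which both assertions follow by \ref{power-of-I}. You instead stay entirely inside the superficial-element/Ratliff--Rush framework: descending along a general superficial sequence to dimension one, where reduction number two forces $\wt{I^n\bar N}=I^n\bar N$ for $n\ge 2$, you get $\wt{e}_3^I(N)=0$ and hence $e_3^I(M)=-q$ with $q=\sum_{n\ge 1}\lambda\big(\wt{I^nN}/\rho^x(\wt{I^nM})\big)$; the equality case then becomes precisely ``the Ratliff--Rush filtration behaves well mod $x$'', i.e. $H^1(L^I(M))=0$ by Theorem \ref{h1zero}, and your two-sided propagation along \ref{longH} (surjectivity of $\Psi_X$ in degrees $\ge 0$, injectivity in degrees $\le -1$) closes the loop with $\xi_I(M)\ge 2$. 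Your route avoids \cite{Hoa-93}, \cite{Trung-87} and the Grothendieck--Serre formula, and yields the a priori stronger equivalence $e_3^I(M)=0$ if and only if $H^1(L^I(M))=0$ (full vanishing, not just of the degree $-1$ component); the paper's route yields instead the clean numerical identity $e_3^I(M)=-\lambda(H^1(L^I(M))_{-1})$, expressing $e_3$ as an asymptotic invariant. The one step you should make fully explicit is the general-position point you flag at the end: you need the superficial sequence $x,y,z$ to generate a minimal reduction attaining $I^3M=(x,y,z)I^2M$, which holds for a generic choice by the usual openness argument in the fibre cone over the (now infinite) residue field --- the set of such triples is Zariski-open and nonempty (it contains the reduction witnessing $\red_I(M)=2$), hence meets the dense open set of superficial sequences.
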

\begin{proof}
We first prove $e_3^I(M) \leq 0$.
We may choose $n_0$-such that for all $n \geq n_0$
\begin{align*}
H^0 (L^{I^n}(M)) &= 0 \\
H^i (L^{I^n} (M))_j &= 0 \quad \text{for} \ j \geq 0.
\end{align*}
        This is so since $L^I(M)(-1) = \bigoplus_{n\geq 0} M/I^nM$. \\
By \cite[2.4]{Hoa-93} we may choose $n_1$ such that for all $n \geq n_1$
\begin{equation*}
H^i (G_{I^n} (M))_j = 0\;\mbox{for}\; j \geq 1 \;\mbox{and}\; i =
0,1,2
\end{equation*}
and since $\red_I (M) = 2$ we get by \cite[3.2]{Trung-87} and    \cite[2.4]{Hoa-93} that
\begin{equation*}
H^3 (G_{I^n} (M))_j = 0 \;\mbox{for}\; j \geq 0.
\end{equation*}

Let  $\phi_M (I,z)$ be the shifted Hilbert-Samuel polynomial of
$M$ \;\wrt\; $I$\\
i.e., $\phi_M (I, n) = \lambda \left(M/I^n M\right)$ for $n \gg 0.$
\begin{equation*}
\phi_M (I,z) = e^I_0 (M)\binom{z+2}{3}  - e_1^I(M)\binom{z+1}{2} +
e_2^I(M)\binom{z}{1} - e_3^I(M).
\end{equation*}
Let $n_2 =$ postulation number of $\phi_M (I,z)$ i.e.
\begin{equation*}
\phi_M (I,n) = \lambda \left(M/I^n M\right)\;\mbox{for}\; n \geq
n_2.
\end{equation*}

\begin{equation*}
\mbox{Fix}\; r \geq \max \{n_0, n_1, n_2\}.
\end{equation*}
Set $K = I^r$.

 \emph{Claim 1:}  $H^2 (L^K (M))_{-1} = 0.$\\

 Set $G = G_K (M), L = L^K (M)$.
 Using the first fundamental exact
sequence  and the corresponding long exact sequence in cohomology we get
\begin{equation*}
H^2 (G)_j \rightarrow H^2 (L)_j \rightarrow H^2 (L)_{j - 1}
\rightarrow H^3 (G)_j \tag{*}
\end{equation*}

As $H^2 (L)_0 = H^3 (G)_0 = 0$ we obtain $H^2 (L)_{-1} = 0.$

\begin{equation*}
\mbox{Set}\ h_i = \lambda \left(H^i (G)_0\right)
\;\mbox{for}\; i = 0, 1, 2, 3.
\end{equation*}
Then $h_0 = h_3 = 0.$

\emph{Claim 2: }   $h_1 = 0$ and $h_2 = \lambda(H^1(L)_1)$.\\
Using the first fundamental exact
sequence  and the corresponding long exact sequence in cohomology we get
\begin{equation*}
H^0 (L)_{n - 1} \rightarrow H^1 (G)_n \rightarrow H^1 (L)_n
\rightarrow H^1 (L)_{n - 1} \rightarrow H^2 (G)_n \rightarrow H^2
(L)_n
\end{equation*}
Therefore $h_1 = 0$ and $h_2 = \lambda (H^1 (L)_{-1})$.\\

\emph{Claim 3: }      $e^I_3 (M) = -\lambda (H^1 (L^I (M))_{-1}) $.

Let
$$P^K_M (Z) = c_0 \binom{z+2}{2} - c_1 \binom{c+1}{1} + c_2$$
 be the Hilbert polynomial of $G_K(M)$ i.e.
\begin{equation*}
P^K_M (n) = \lambda \left(K^{n} M/K^{n + 1} M\right)\;\mbox{for}\;
n \gg 0.
\end{equation*}
By Grothendieck-Serre  formula \cite[4.4.3]{BH}  we get
\begin{equation*}
H^K (M,n) - P^K_M (n) = \sum^{3}_{i = 0} (-1)^2 \lambda (H^i (G_K
(M))_n)
\end{equation*}
Sot for $n = 0$ we get
\begin{equation*}
\lambda \left(M/I^r M\right) - \left[c_0 - c_1 +
c_2\right] = h_0 - h_1 + h_2 - h_3 = h_2. \tag{$\dagger$}
\end{equation*}

Write
\begin{equation*}
\phi_M (I^r,z) = c_0\binom{z+2}{3}  - c_1\binom{z+1}{2} + c_2\binom{z}{1} - c_3.
\end{equation*}
Clearly $\phi_M (I^r,z) = \phi_M (I, rz).$  In particular $c_3 =
e^I_3 (M)$\\
Also notice that
\begin{equation*}
\phi_M (I^r,1) = c_0 - c_1 + c_2 - c_3 = \phi_M (I,r) = \lambda
\left(M/I^r M\right)
\end{equation*}
(the last equality holds since $r \geq n_2$).\\
So by $(\dagger)$ we get
\begin{equation*}
h_2 = - c_3 = - e^I_3 (M).
\end{equation*}
Thus $e^I_3 (M) = -\lambda (H^1 (L^I (M))_{-1}) \leq 0.$

By Proposition
9.2  in Part 1 it follows that $e^I_3 (M) = 0$ \ff \\  $ \depth G_{I^n} (M) \geq
2.$

Otherwise note that    $H^1 (L^I (M))_{-1} \neq 0$. So $\xi_I(M) = 1$,            by \ref{power-of-I}. \end{proof}
\begin{remark}
If $\depth G_I(A) \geq d -1$ then by a result of Marley \cite[Corollary 2]{Marley-89}, all Hilbert coefficients of $A$ \wrt \ $I$ are non-negative.
Since $$e_3^{I^n}(M) = e_3^I(A) \quad \text{for all} \ \ n \geq 1$$
we get that if $e_3^I(A) < 0$ then $\depth G_{I^n}(M) \leq 1$ for all $n \gg 0$. It follows that
$\xi_I(M) =1$.
\end{remark}

We give the following example due to Marley   \cite[page 8]{Marley-89}
\begin{example}
Let $ A = \mathbb{Q}[x,y,x]$ and let $I = (x^3,y^3,z^3, x^2y, xy^2, yz^2, xyz)$. Then
$J = (x^3,y^3,z^3)$ is a minimal reduction of $I$. Furthermore $\red_I(J) = 2$. Using COCOA
one can check that $e_3(I) = -1$.
 By above remark $\xi_I(M) =1$. Notice $\red(I) \neq 1$ So it is  2 since   $\red_I(J) = 2$.
\end{example}

%\bibliographystyle{amsplain}
%\bibliography{rr2Ref}

\begin{thebibliography}{10}
\bibitem{BH} Winfried Bruns and J{\"u}rgen Herzog, \emph{Cohen-{M}acaulay rings}, Cambridge Studies in Advanced Mathematics, vol.~39, Cambridge University Press, Cambridge, 1993. \MR{MR1251956 (95h:13020)} \bibitem{Cpr} Alberto Corso, Claudia Polini, and Maria~Evelina Rossi, \emph{Depth of associated graded rings via {H}ilbert coefficients of ideals}, J. Pure Appl. Algebra \textbf{201} (2005), no.~1-3, 126--141. \MR{MR2158751 (2006e:13002)} \bibitem{GuRossi} Anna Guerrieri and Maria~Evelina Rossi, \emph{Hilbert coefficients of {H}ilbert filtrations}, J. Algebra \textbf{199} (1998), no.~1, 40--61. \MR{MR1489353 (98i:13027)} \bibitem{Hoa-93} L.~T. Hoa, \emph{Reduction numbers and {R}ees algebras of powers of an ideal}, Proc. Amer. Math. Soc. \textbf{119} (1993), no.~2, 415--422. \MR{MR1152984 (93k:13009)} \bibitem{Huck-Marley-97} Sam Huckaba and Thomas Marley, \emph{Hilbert coefficients and the depths of associated graded rings}, J. London Math. Soc. (2) \textbf{56} (1997), no.~1, 64--76. \MR{MR1462826 (98i:13028)} \bibitem{Hun} Craig Huneke, \emph{Hilbert functions and symbolic powers}, Michigan Math. J. \textbf{34} (1987), no.~2, 293--318. \MR{MR894879 (89b:13037)} \bibitem{It} Shiroh Itoh, \emph{Hilbert coefficients of integrally closed ideals}, J. Algebra \textbf{176} (1995), no.~2, 638--652. \MR{MR1351629 (96g:13019)} \bibitem{Marley-89} Thomas Marley, \emph{The coefficients of the {H}ilbert polynomial and the reduction number of an ideal}, J. London Math. Soc. (2) \textbf{40} (1989), no.~1, 1--8. \MR{MR1028910 (90m:13026)} \bibitem{Nar} Masao Narita, \emph{A note on the coefficients of {H}ilbert characteristic functions in semi-regular local rings}, Proc. Cambridge Philos. Soc. \textbf{59} (1963), 269--275. \MR{MR0146212 (26 \#3734)} \bibitem{Pu1} Tony~J. Puthenpurakal, \emph{Hilbert-coefficients of a {C}ohen-{M}acaulay module}, J. Algebra \textbf{264} (2003), no.~1, 82--97. \MR{MR1980687 (2004c:13015)} \bibitem{Pu5} \bysame, \emph{Ratliff-{R}ush filtration, regularity and depth of higher associated graded modules. {I}}, J. Pure Appl. Algebra \textbf{208} (2007), no.~1, 159--176. \MR{MR2269837 (2007i:13005)} \bibitem{RV-big} Maria~Evelina Rossi and Giuseppe Valla, \emph{{Hilbert Function of Filtered Modules}}, eprint: arXiv:0710.2346. \bibitem{RV2} \bysame, \emph{Cohen-{M}acaulay local rings of dimension two and an extended version of a conjecture of {J}. {S}ally}, J. Pure Appl. Algebra \textbf{122} (1997), no.~3, 293--311. \MR{MR1481093 (98i:13045)} \bibitem{RV3} \bysame, \emph{Cohen-{M}acaulay local rings of embedding dimension {$e+d-3$}}, Proc. London Math. Soc. (3) \textbf{80} (2000), no.~1, 107--126. \MR{MR1719172 (2001e:13005)} \bibitem{RV4} \bysame, \emph{The {H}ilbert function of the {R}atliff-{R}ush filtration}, J. Pure Appl. Algebra \textbf{201} (2005), no.~1-3, 25--41. \MR{MR2158745 (2006g:13008)} \bibitem{Trung-87} N.~V. Trung, \emph{Reduction exponent and degree bound for the defining equations of graded rings}, Proc. Amer. Math. Soc. \textbf{101} (1987), no.~2, 229--236. \MR{MR902533 (89i:13031)} \end{thebibliography}

\providecommand{\bysame}{\leavevmode\hbox to3em{\hrulefill}\thinspace}
\providecommand{\MR}{\relax\ifhmode\unskip\space\fi MR }
% \MRhref is called by the amsart/book/proc definition of \MR.
\providecommand{\MRhref}[2]{%
  \href{http://www.ams.org/mathscinet-getitem?mr=#1}{#2}
}
\providecommand{\href}[2]{#2}
 \end{document}